\def\cqfd{\skip10=\parfillskip\parfillskip=0pt
\enspace\hfill\symbolecqfd\par\parfillskip=\skip10\par\medskip}
\def\symbolecqfd{\rlap{$\sqcap$}$\sqcup$}
\newtheorem{theorem}{Theorem}[section]
\newtheorem{proposition}[theorem]{Proposition}
\newtheorem{lemma}[theorem]{Lemma}
\newtheorem{corollary}[theorem]{Corollary}
\newtheorem{pro-fact}[theorem]{Fact}
\newtheorem{pro-example}[theorem]{Example}
\newenvironment{example}{\begin{pro-example}\rm}{\cqfd\end{pro-example}}
\newtheorem{pro-remark}[theorem]{Remark}
\newenvironment{remark}{\begin{pro-remark}\rm}{\cqfd\end{pro-remark}}
\newcommand{\func}[2]{\textsc{#1}(#2)}
\newcommand{\keyw}[1]{\textbf{#1}}
\newcommand{\com}[1]{\em // #1}
\newenvironment{proof}{\rm \trivlist \item[\hskip \labelsep{\bf
Proof.}]}{\cqfd\endtrivlist}
\def\cqfd{\skip10=\parfillskip\parfillskip=0pt
\enspace\hfill\symbolecqfd\par\parfillskip=\skip10\par\medskip}
\def\symbolecqfd{\rlap{$\sqcap$}$\sqcup$}
\def\proofofTheorem#1{\rm \trivlist \item[\hskip \labelsep{\bf
Proof of Theorem~\ref{#1}.}]}
\def\eopo{\cqfd\endtrivlist}
\def\binom#1#2{{#1\choose#2}}
\def\rank{\textsf{rank}}
\def\inter[#1]{[\![#1]\!]}
\def\inv{^{-1}}
\def \N {\mathbb{N}}
\def \E {\mathbb{E}}
\def \O {\mathcal{O}}
\def \C {\mathcal{C}}
\def \D {\mathcal{D}}
\def \B {\mathcal{B}}
\def \A {\mathcal{A}}
\begin{document}

\markboth{F. Bassino, C. Nicaud, P. Weil}
{Random generation of finitely generated subgroups of 
    a free group}

\title{Random generation of finitely generated subgroups of a free
group\protect\footnote{%
The first and third authors benefitted from the support of the PICASSO
project \textsc{Automata and free groups}.  The third author
acknowledges partial support from the ESF program \textsc{AutoMathA}
and the Algebraic Cryptography
Center, at the Stevens Institute of Technology.}%
}

\author{
Fr\'ed\'erique Bassino,
\small{\url{bassino@univ-mlv.fr}}\protect\footnote{%
Institut Gaspard Monge, Universit\'e de Marne la Vall\'ee,
77454 Marne-la-Vall\'ee Cedex 2, France}
    \\
    \small{Institut Gaspard Monge, Universit\'e de Marne la Vall\'ee}
    \\
    \null
    \\
    Cyril Nicaud,
    \small{\url{nicaud@univ-mlv.fr}}\protect\footnote{%
    Institut Gaspard Monge, Universit\'e de Marne la Vall\'ee,
    77454 Marne-la-Vall\'ee Cedex 2, France}
    \\
    \small{Institut Gaspard Monge, Universit\'e de Marne la Vall\'ee}
    \\
    \null
    \\
    Pascal Weil,
    \small{\url{pascal.weil@labri.fr}}\protect\footnote{%
    LaBRI, 351 cours de la Lib\'eration, 33400 Talence, France.}
    \\
    \small{LaBRI, Universit\'e de Bordeaux, CNRS}
    }

    \date{\today}

\maketitle

\begin{abstract}
    We give an efficient algorithm to randomly generate finitely
    generated subgroups of a given size, in a finite rank free group.
    Here, the size of a subgroup is the number of vertices of its
    representation by a reduced graph such as can be obtained by the
    method of Stallings foldings.  Our algorithm randomly generates a
    subgroup of a given size $n$, according to the uniform
    distribution over size $n$ subgroups.  In the process, we give
    estimates of the number of size $n$ subgroups, of the average rank
    of size $n$ subgroups, and of the proportion of such subgroups
    that have finite index.  Our algorithm has average case complexity
    $\O(n)$ in the RAM model and $\O(n^2\log^2n)$ in the bitcost
    model.
\end{abstract}

\bigskip

\noindent\textbf{Keywords}: subgroups of free groups, random 
generation

\medskip

\noindent\textbf{MSC}: 05A16, 20E05

\vfill

\goodbreak

Algorithmic problems in combinatorial group theory have been the focus
of increased interest (see
\cite{BridsonWise,MSW,KM,jitsukawa,genericKMSS,averageKMSS,MiasUsha}
for recent examples).  This is especially the case for the theory of
free groups and the investigation of their finitely generated
subgroups, which is the focus of this paper.

As in other fields, the investigation of algorithmic problems and of 
their complexity brings to light interesting structural properties of 
the objects under study. One classical approach is to study the 
properties of \textit{random} objects, here of \textit{random} 
finitely generated subgroups of a free group. This naturally depends 
on the probability distribution we adopt, we come back to this below.

The complexity of algorithms is often estimated according to the
worst-case paradigm.  It can also be estimated in average, or
\textit{generically} \cite{genericKMSS}.  Both these concepts also
depend on the choice of a distribution, and can benefit directly from
the enumeration and generation results in this paper.  Random
generation can also be useful to test conjectures or algorithms with
large set of \textit{representative} instances (again, depending on
the choice of a probability distribution), provided that the random
generation algorithm is fast enough.

In this paper, we use the well-known fact (Stallings~\cite{Stallings})
that every finitely generated subgroup of a free group $F$ with basis
$A$ admits a unique graphical representation of the form $(\Gamma,1)$,
where $\Gamma$ is a finite directed graph with $A$-labeled edges and 1
is a designated vertex of $\Gamma$ --- subject to certain
combinatorial conditions, see Section~\ref{sec: representation} for
details.  Then we consider the number of vertices of $\Gamma$ to be a
measure of the \textit{size} of the subgroup represented by
$(\Gamma,1)$.  Note that the size of $H$ is strongly dependent on the
choice of the basis $A$ of $F$.  Very importantly also, for each $n\ge
1$, there are finitely many subgroups of $F$ of size $n$.  The
probability distribution on subgroups discussed in this paper is the
uniform distribution on the set of fixed size subgroups: if there are
$s(n)$ subgroups of size $n$, each has probability $\frac1{s(n)}$.

A large and growing number of algorithmic problems on free groups
admit efficient solutions using these graphical representations (see
\cite{jitsukawa,KM,genericKMSS,averageKMSS,MSW,MVW,RVW} among others),
and this further emphasizes the interest of a random generation scheme
based on this representation.

The main result of the paper is an efficient algorithm to randomly
generate size $n$ subgroups of $F$.  Its average case complexity is
$\O(n)$ in the RAM, or unit-cost model, and $\O(n^2\log^2n)$ in the
bit-cost model, see Section~\ref{sec: complexity} for a more precise
discussion.  Our algorithm actually generates graphical
representations $(\Gamma,1)$ of subgroups, but we want to emphasize
that, as these representations are in bijective correspondence with
finitely generated subgroups of $F$, we truly achieve a uniform
distribution of (size $n$) \textit{subgroups}.

The strategy followed by the algorithm is described in
Section~\ref{sec: enum}.  The algorithm itself is actually simple and
easily implementable, besides being fast.  Its proof is more complex;
it relies on the recursive nature of the combinatorial structures
underlying graphical representations of subgroups (see
Section~\ref{sec:enumeration}, and in particular
Section~\ref{sec:A-graphs}), and we make direct use of the concepts
and the tools of the so-called \textit{recursive method} heralded by
Nijenhuis and Wilf \cite{nw78} and systematized by Flajolet,
Zimmermann and van Cutsem \cite{FZvC}.

In Section \ref{sec:other}, we study the distribution of the ranks of
size $n$ subgroups and we show that if $F$ has rank $r \ge 2$, then
the mean value of the rank of a size $n$ subgroup is $(r-1)n - r\sqrt
n + 1$, with standard deviation $o(\sqrt n)$.  In
Section~\ref{sec:permutations}, we show how our strategy can be
modified (and simplified) to randomly generate in linear time size $n$
finite index subgroups (that is, subgroups of rank $(r-1)n + 1$) ---
even though these groups are negligible among general size $n$
subgroups.  We actually give a precise estimate of the proportion of
such subgroups and we prove that it converges to 0 faster than the
inverse of any polynomial.

The paper closes on a short discussion of related questions, and in
particular of the comparison of our distribution with that which is
induced by the random generation of an $n$-tuple of words and the
consideration of the subgroup they generate, see
\cite{jitsukawa,MTV,MiasUsha} for instance.

\medskip

Throughout this paper, we denote by $|X|$ the cardinality of a set
$X$, and by $\inter[1,n]$ the set $\{1,\ldots,n\}$ (where $n$ is a
positive integer).

%%%%%%%%%%%%%%%%%%%%%%%%
\section{General notions and generation strategy}

%%%%%%%%%%%%%%%%%%%%%%%%
\subsection{Graphical representation of subgroups}\label{sec: representation}

Let $F$ be a free group with finite rank $r \ge 2$ and let $A$ be a
fixed basis of $F$.  We sometimes write $F = F(A)$ and the elements of
$F$ are naturally represented as reduced words over the alphabet $A
\sqcup A\inv$.  It is well-known that the subgroups of $F$ are free as
well.  Moreover, each finitely generated subgroup of $F$ can be
represented uniquely by a finite graph of a particular type, by means
of the technique known as \textit{Stallings foldings} \cite{Stallings}
(see also \cite{Weil,KM,Touikan}).  We refer the reader to the
literature for a description of this very fruitful technique, and we
only record here the results that will be useful for our purpose.

An \textit{$A$-graph} is defined to be a pair $\Gamma = (V,E)$ with $E
\subseteq V\times A\times V$, such that
\begin{itemize}
    \item if $(u,a,v), (u,a,v') \in E$, then $v = v'$;
    \item if $(u,a,v), (u',a,v) \in E$, then $u = u'$.
\end{itemize}
The elements of $V$ are called the \textit{vertices} of $\Gamma$, the
elements of $E$ are its \textit{edges}, and we sometimes write
$V(\Gamma)$ for $V$ and $E(\Gamma)$ for $E$.  We say that $\Gamma$ is
\textit{connected} if the underlying undirected graph is connected.
If $v\in V(\Gamma)$, we say that $v$ is a \textit{leaf} if $v$ occurs
at most once in (the list of triples defining) $E(\Gamma)$ and we say
that $\Gamma$ is \textit{$v$-trim} if no vertex $w\ne v$ is a leaf.
Finally we say that the pair $(\Gamma,v)$ is \textit{admissible} if
$\Gamma$ is a $v$-trim and connected $A$-graph.

Then it is known that:
\begin{itemize}
    \item Stallings foldings associate with each finitely generated
    subgroup $H$ of $F(A)$ a unique admissible pair of the form
    $(\Gamma,1)$, which we call the \textit{graphical representation 
    of $H$} in this paper \cite{Stallings,Weil,KM};
    
    \item every admissible pair $(\Gamma,1)$ is the graphical
    representation of a unique finitely generated subgroup of $F(A)$
    \cite{Stallings,Weil,KM};
    
    \item if $H$ is given by a finite set of generators (in the form 
    of reduced words over $A\sqcup A\inv$) of total length $n$, then 
    the graphical representation of $H$ can be computed in time 
    $\O(n\log^*n)$ \cite{Touikan};
    
    \item if $(\Gamma,1)$ is the graphical representation of $H$, then
    $\rank(H) = |E(\Gamma)| -|V(\Gamma)| + 1$
    \cite{Stallings,Weil,KM};
    
    \item if $(\Gamma,1)$ is the graphical representation of $H$, then
    $H$ has finite index if and only if for each $v\in V(\Gamma)$ and
    for each $a\in A$, there is an edge of the form $(v,a,w)\in
    E(\Gamma)$ \cite{Stallings,Weil,KM}, if and only if $\rank(H) =
    (|A|-1)|V(\Gamma)| + 1$.
\end{itemize}

We sometimes identify $H$ and its graphical representation
$(\Gamma,1)$ --- for instance when we say that we randomly generate
subgroups of $F$: what we generate is actually the graphical
representation of such subgroups.  As explained in the introduction,
we consider the number of vertices to be a measure of the size of
$\Gamma$ and we write $|H| = |\Gamma| = |V|$.  In particular, $F$
has finitely many subgroups of size $n$.

%%%%%%%%%%%%%%%%%%%%%%%%
\subsection{Enumeration and random generation}\label{sec: enum}

As we shall see, $A$-graphs fall in the category of decomposable
structures, that is, structures that can be built from unit elements
and from operations such as the union, direct product, set formation,
etc.  We will use the so-called \textit{recursive method} to enumerate
and to randomly generate such structures \cite{FZvC}.  Details are
given further in the paper, concerning the enumeration
(Section~\ref{sec:enumeration}) and the random generation algorithm
and its complexity (Section~\ref{sec:randomgeneration}).  At this
point, let us simply say that the random generation of size $n$
$A$-graphs requires a pre-computation phase in $\O(n)$, after which
each draw takes time $\O(n)$.

The rest of this section is devoted to an overview of our strategy.

\begin{remark}\label{rem:boltzamnn1}
    There exists another method than the recursive method, to derive a
    random generation algorithm from a combinatorial specification,
    this time according to a \textit{Boltzmann distribution}.  Recall
    that, in such a distribution, an object $\gamma$ receives a
    probability essentially proportional to an exponential of its size
    $|\gamma|$.  (More precisely this probability depends upon a
    positive real parameter $x$, and it is proportional to
    $x^{|\gamma|}$ when $\gamma$ is an unlabeled structure and to
    $x^{|\gamma|}/|\gamma|!$ when $\gamma$ is labeled; see
    Section~\ref{sec labeled vs unlabeled} below about labeled vs.
    unlabeled structures.)  In particular, Boltzmann samplers do not
    generate objects of a fixed size.  They depend on the real
    parameter $x>0$ and, for any given integer $n$, the value of $x$
    can be chosen such that the average size of the generated elements
    is $n$.  Even though the size of the objects generated is not
    fixed, Boltzmann samplers guarantee that two elements of the same
    size have the same probability to be generated.
    
    A method to systematically produce Boltzmann samplers was recently
    introduced by Duchon, Flajolet, Louchard and Schaeffer
    \cite{dfls04} for labeled structures (Flajolet, Fusy and Pivoteau
    for unlabeled structures in \cite{FFP07}).  The evaluation of $x$
    is the only required precomputation and the complexity of
    generation itself is linear as long as small variations in size
    are allowed.  This approach can also be used for exact-size
    generation, but in the case of $A$-graphs it is less efficient
    than the recursive method (see Remark \ref{rem:boltzmann2}).
\end{remark}

%%%%%%%%%%%%%%%%%%%%%%%%
\subsubsection{We count labeled $A$-graphs}\label{sec labeled vs unlabeled}

Enumeration for us, is the enumeration of structures up to
isomorphism.  The structures which we want to generate are admissible
pairs $(\Gamma,1)$, that is, $A$-graphs with one vertex labeled 1,
that are connected and 1-trim.  We later use the phrase
\textit{admissible $A$-graphs}.  Leaving aside for a moment the
properties of connectedness and 1-trimness, we are interested in
$A$-graphs with a distinguished vertex.  This is an intermediary
situation between labeled and unlabeled structures, which are two
great categories of structures for which there exist a large toolkit
for enumeration and random generation \cite{FZvC,FSBook,dfls04,FFP07}.

An $A$-graph $\Gamma = (V,E)$ of size $n$ is said to be
\textit{labeled} if it is equipped with a bijection $\lambda\colon
\inter[1,n] \rightarrow V$.  Of course, there are $n!$ different
such bijections, but some of them may yield isomorphic labeled
$A$-graphs.  For instance, if $E = \emptyset$ (so that $\Gamma$
consists of $n$ isolated vertices), all labelings of $\Gamma$ are
isomorphic.

\begin{figure}[htbp]
\begin{picture}(114,39)(0,-39)
\gasset{Nw=4,Nh=4}

\node(n0)(8.0,-4.0){}

\node(n1)(8.0,-28.0){}

\put(3,-37){$\Gamma_1$}

\node(n2)(30,-4.0){}

\node(n3)(30,-28.0){}

\put(25,-37){$\Gamma_2$}

\node(n4)(52,-4.0){}

\node(n5)(72,-4.0){}

\node(n6)(52,-28.0){}

\node(n7)(72,-28.0){}

\put(47,-37){$\Gamma_3$}

\node(n8)(94,-4.0){}

\node(n9)(114,-4.0){}

\node(n10)(94,-28.0){}

\node(n11)(114,-28.0){}

\put(89,-37){$\Gamma_4$}

\drawedge(n4,n5){$a$}

\drawedge(n5,n7){$b$}

\drawedge(n7,n6){$a$}

\drawedge(n6,n4){$b$}

\drawedge(n8,n9){$a$}

\drawedge(n9,n11){$a$}

\drawedge(n11,n10){$b$}

\drawedge(n10,n8){$b$}

\drawedge[curvedepth=4.0](n2,n3){$a$}

\drawedge[curvedepth=4.0](n3,n2){$b$}

\drawedge[curvedepth=4.0](n0,n1){$a$}

\drawedge[curvedepth=4.0](n1,n0){$a$}

\end{picture}
\caption{Four $A$-graphs with different numbers of non-isomorphic 
labelings}\label{Figure labelings}
\end{figure}
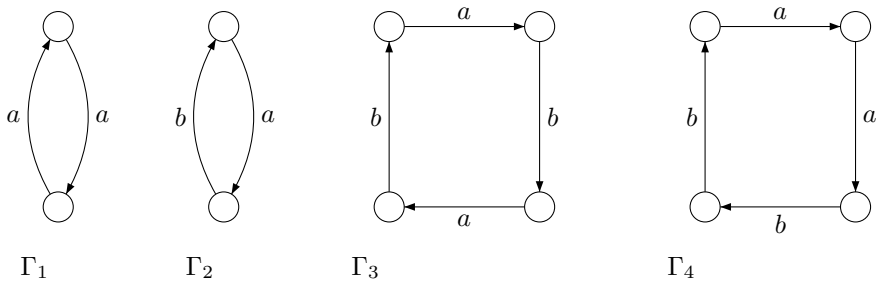

\begin{example}\label{ex cycles}
    Consider the $A$-graphs in Figure~\ref{Figure labelings}. Then 
    all labelings of $\Gamma_1$ are isomorphic but $\Gamma_2$ has 2 
    non-isomorphic labelings. How many non-isomorphic labelings does 
    $\Gamma_3$ have? And $\Gamma_4$?
\end{example}

Counting structures such as $A$-graphs can lead to complex
considerations involving, say, the automorphism groups of each of the
connected components of the $A$-graphs, but standard methods of
analytic combinatorics actually solve this enumeration problem for us,
see Section~\ref{sec:EGS}.  These methods rely on the use of labeled
structures, and we will therefore enumerate and generate labeled
$A$-graphs.  Why this is justified even for the purpose of randomly
generating admissible pairs is discussed in
Section~\ref{sec:rejection}.

%%%%%%%%%%%%%%%%%%%%%%%%
\subsubsection{Generating series}\label{sec:EGS}

Let $\A$ be a class of combinatorial structures.  If $\A$ has
$a_n$ elements of size $n$, the formal power series $\sum_n a_n z^n$
and $\sum_n \frac{a_n}{n!} z^n$ are called respectively the
\textit{(ordinary) generating series} and the \textit{exponential
generating series} (\textit{EGS}) of the class $\A$.

As it turns out, certain operations on classes of labeled
combinatorial structures have a direct translation over their EGS. For
instance, suppose that $\A$ is the union of the disjoint classes
$\B$ and $\C$ (that is, a size $n$ element of $\A$ is a size
$n$ element of either $\B$ or $\C$), and let $A(z)$, $B(z)$ and
$C(z)$ be the EGS of the three classes.  It is immediate that $A(z) =
B(z) + C(z)$.

For more complex operations, one needs to handle the question of
relabelings.  If $S$ is a size $n$ structure with a labeling function
$\lambda$, we say that $\mu$ is an expansion of $\lambda$ if the
domain of $\mu$ is of the form $\{k_1,\ldots,k_n\} \subset \N$ with
$k_1 < \cdots < k_n$ and $\mu(k_i) = \lambda(i)$ for each $i$.  If
$S_1,\ldots,S_r$ are structures of size $n_1, \ldots, n_r$ with
labeling functions $\lambda_1, \ldots, \lambda_r$, then the sequence
$S = (S_1, \ldots, S_r)$ is a structure of size $n = \sum_i n_i$ and
we say that a labeling $\lambda$ of $S$ is \textit{compatible} with
the $\lambda_i$ if it is obtained by the combination of expansions of
the $\lambda_i$, whose domains form a partition of $\inter[1,n]$.
In particular, $\lambda$ is not uniquely determined by the
$\lambda_i$.

\begin{figure}[htbp]
\begin{picture}(114,39)(0,-39)
\gasset{Nw=4,Nh=4}

\node(n0)(8.0,-4.0){1}

\node(n1)(8.0,-28.0){2}

\put(3,-37){$\Gamma_1$}

\node(n2)(30,-4.0){1}

\node(n31)(22,-28.0){3}

\node(n32)(38,-28.0){2}

\put(18,-37){$\Gamma_2$}

\node(n10)(55.0,-4.0){1}

\node(n11)(55.0,-28.0){3}

\node(n12)(71,-4.0){2}

\node(n131)(63,-28.0){5}

\node(n132)(79,-28.0){4}

\put(50,-37){a labeling of $(\Gamma_1,\Gamma_2)$\dots}

\node(n20)(96.0,-4.0){2}

\node(n21)(96.0,-28.0){5}

\node(n22)(112,-4.0){1}

\node(n231)(104,-28.0){4}

\node(n232)(120,-28.0){3}

\put(93,-37){and another}

\drawedge(n2,n32){$a$}

\drawedge(n32,n31){$a$}

\drawedge(n31,n2){$b$}

\drawedge[curvedepth=4.0](n0,n1){$b$}

\drawedge[curvedepth=4.0](n1,n0){$a$}

\drawedge(n12,n132){$a$}

\drawedge(n132,n131){$a$}

\drawedge(n131,n12){$b$}

\drawedge[curvedepth=4.0](n10,n11){$b$}

\drawedge[curvedepth=4.0](n11,n10){$a$}

\drawedge(n22,n232){$a$}

\drawedge(n232,n231){$a$}

\drawedge(n231,n22){$b$}

\drawedge[curvedepth=4.0](n20,n21){$b$}

\drawedge[curvedepth=4.0](n21,n20){$a$}

\end{picture}
\caption{Two labeled $A$-graphs and two compatible labelings of the
sequence they compose}\label{Figure relabeling}
\end{figure}
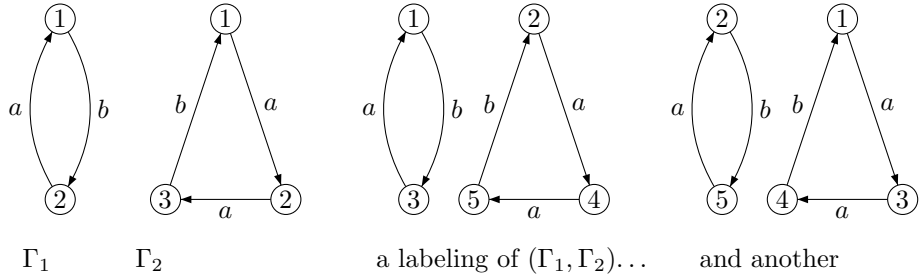

\begin{example}
    Figure~\ref{Figure relabeling} shows two labeled $A$-graphs of
    sizes 2 and 3, and several compatible labelings of the sequence
    they compose.
\end{example}    

We record here three such operations, that will be important for our
purpose, and we refer the readers to \cite{FSBook1,FSBook} for the
proof of this important result.

\begin{proposition}\label{EGS calculus}
    Let $\A$ be a class of labeled structures with EGS $A(z)$.
    \begin{itemize}
        \item Let $\B$ be the class of \textit{sequences of
        structures from class $\A$} -- that is, a size $n$ labeled
        structure in $\B$ is a tuple $(S_1,\ldots,S_r)$ of labeled
        structures in $\A$ (each $S_i$ of size $n_i$ with $n =
        \sum_i n_i$), equipped with a labeling function compatible
        with the labelings of the $S_i$.  Then the EGS of $\B$ is
        $$A(z) = \frac{1}{1-B(z)}.$$
        
        \item Let $\C$ be the class of \textit{cycles of structures from
        class $\A$}, where a cycle of structures is an equivalence
        class of sequences of structures in $\A$, under the
        relation which identifies a sequence with its cyclic
        permutations.  Then the EGS of $\C$ is
        $$C(z) = \log\left(\frac{1}{1-B(z)}\right).$$

        \item Let $\D$ be the class of \textit{sets of structures
        from class $\A$}, where a set of structures is an
        equivalence class of sequences of structures in $\A$, under
        the relation which identifies a sequence with its
        permutations.  Then the EGS of $\D$ is
        $$D(z) = \exp\big(B(z)\big).$$
    \end{itemize}
\end{proposition}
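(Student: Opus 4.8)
The plan is to derive all three formulas from a single combinatorial observation about how compatible labelings behave under the \emph{labeled product} of classes, and then to exhibit sequences, cycles and sets as controlled disjoint unions of iterated labeled products. Throughout I assume, as holds for every class to which the proposition is applied, that the base class $\A$ has no structure of size $0$, i.e.\ that $A(z)$ has zero constant term; this is what makes the constructions well defined (finitely many objects of each size) and the series manipulations below legitimate identities of formal power series.

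First I would establish the \textbf{product rule}: if $\A_1,\A_2$ are classes of labeled structures with EGS $A_1(z),A_2(z)$, then the class of ordered pairs $(S_1,S_2)$ with $S_i\in\A_i$, taken with \emph{all} compatible labelings, has EGS $A_1(z)A_2(z)$. The argument is purely a matter of bookkeeping: a compatible labeling of a size-$n$ pair is the same data as a choice of the $n_1$-element subset of $\inter[1,n]$ forming the domain of the expansion of the labeling of $S_1$ --- there are $\binom{n}{n_1}$ such choices --- together with a size-$n_1$ labeled structure of $\A_1$ and a size-$(n-n_1)$ labeled structure of $\A_2$; one must check there is no overcounting, which is clear since the ordered partition of $\inter[1,n]$ into the two blocks is recovered from the labeling. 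Hence the number $c_n$ of such labeled pairs is $c_n=\sum_{n_1+n_2=n}\binom{n}{n_1}a^{(1)}_{n_1}a^{(2)}_{n_2}$, and dividing by $n!$ turns this into the Cauchy product of the EGS. An immediate induction gives that the $r$-fold labeled product of a single class $\A$ has EGS $A(z)^r$.

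The three constructions are now read off as disjoint unions. A sequence is by definition an $r$-tuple for some $r\ge 0$ (the empty tuple being the unique size-$0$ element, contributing the constant $1$), so $\B$ decomposes as the disjoint union over $r\ge 0$ of the $r$-fold labeled products of $\A$, whence its EGS is $\sum_{r\ge 0}A(z)^r=\frac1{1-A(z)}$ (summable because $A(0)=0$). For cycles and sets the extra ingredient is that the cyclic group $\mathbb{Z}/r\mathbb{Z}$ (resp.\ the symmetric group $S_r$) acts \emph{freely} on the set of labeled $r$-tuples of positive length: a labeling distributes $\inter[1,n]$ among $r$ pairwise disjoint \emph{nonempty} blocks, which are therefore pairwise distinct, so any nontrivial rotation or permutation of the components sends at least one component to one with a different label-domain and hence changes the labeled tuple. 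Consequently each labeled cycle of length $r$ (resp.\ labeled set with $r$ components) is the image of exactly $r$ (resp.\ $r!$) labeled $r$-tuples, giving $C(z)=\sum_{r\ge 1}\frac1r A(z)^r=\log\frac1{1-A(z)}$ and $D(z)=\sum_{r\ge 0}\frac1{r!}A(z)^r=\exp\big(A(z)\big)$.

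The only genuinely delicate points are the no-overcounting verification in the product rule and the freeness of the cyclic and symmetric group actions; I expect the latter to be the main obstacle, since it is precisely where the "all labels distinct" nature of labeled structures is used, and it is what makes the exponential formalism here cleaner than the ordinary one. Everything after those two points is summation of the resulting geometric, logarithmic and exponential series.
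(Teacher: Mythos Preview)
Your argument is correct and is precisely the standard derivation of these identities: the labeled product rule via the binomial distribution of labels, followed by summation over the length $r$ using the freeness of the $\mathbb{Z}/r\mathbb{Z}$- and $S_r$-actions on labeled tuples of nonempty components. The paper does not give its own proof of this proposition; it simply records the three formulas and refers the reader to Flajolet and Sedgewick \cite{FSBook1,FSBook}, so there is nothing to compare your approach against beyond noting that it is exactly the argument one finds in those references. (Incidentally, you have silently repaired a typographical slip in the statement: the formulas should read $\frac{1}{1-A(z)}$, $\log\frac{1}{1-A(z)}$, $\exp(A(z))$ with the base-class EGS $A(z)$ on the inside, as your derivation and the paper's subsequent applications make clear.)
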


Let us first apply this \textit{calculus} to a simple example.  Let
$\A$ be the class consisting of a single graph, with one vertex and
no edges (the vertex being necessarily labeled 1).  Its EGS is $A(z) =
z$.  The class of labeled sequences of structures in $\A$ is in bijection
with the class of labeled line graphs of the form
\begin{figure}[htbp]
\begin{center}\compatiblegastexun
\unitlength=1mm
\begin{picture}(65,3)(0,-3)
 \thicklines
 %\thinlines
  \setvertexdiam{5}
  \letvertex A=(0,-2)    \drawcircledvertex(A){$i_1$}
  \letvertex B=(15,-2)    \drawcircledvertex(B){$i_2$}
  \letvertex C=(30,-2)   \drawcircledvertex(C){$i_3$}
  \letvertex D=(65,-2)    \drawcircledvertex(D){$i_n$}
  \letvertex E=(45,-2)

  \drawedge(A,B){}
  \drawedge(B,C){}
  \drawedge(C,E){}
\put(50.5,-3.0){$\cdots$}
\end{picture}
\end{center}
\end{figure}
with $\{i_1,\ldots,i_n\} = \inter[1,n]$, and its EGS is, according
to Proposition~\ref{EGS calculus}, equal to $\frac{1}{1-z} = \sum_n
z^n$.  This corresponds to the fact that the number of such size $n$
sequences is $n!$ (so that its quotient by $n!$ is 1).

Similarly, the EGS of the class of cycles of structures in $\A$,
that is, the class of labeled cyclic graphs (such as the graphs in
Example~\ref{ex cycles} that are labeled with a single letter) is
$\log\left(\frac{1}{1-z}\right)$.

We extend these examples in Section~\ref{sec:A-graphs} to compute
the EGS of labeled $A$-graphs.

%%%%%%%%%%%%%%%%%%%%%%%%
\subsubsection{A rejection algorithm}\label{sec:rejection}

The enumeration and random generation of labeled $A$-graphs is however
not our objective.  We want to generate admissible pairs, that is,
$A$-graphs with a distinguished vertex 1, that are connected and
1-trim.  This will be achieved by a \textit{rejection algorithm} (see
\cite{dev86}).

Suppose one wants to draw a number between 1 and 5, using a dice. It 
is natural to throw the dice repeatedly, until the result is 
different from 6. This is a semi-algorithm, since in the worst case 
it may never end --- if we draw only 6's ---, but we will loosely call 
it an algorithm.

Formally, suppose we want to generate elements of a set $X$, according
to a probability law $p_X$.  Suppose that $X$ is a subset of $Y$, and
that we have a probability law $p_Y$ on $Y$, whose restriction to $X$
is $p_X$.  If we have an algorithm to generate elements of $Y$
according to $p_Y$, we may use this algorithm to generate elements of
$X$ as follows: repeatedly draw an element of $Y$, reject it if it is
not in $X$, stop if it is in $X$.  The average complexity of such an
algorithm depends on the complexity of the generating algorithm on
$Y$, on the complexity of the test whether an element of $Y$ is in
$X$, and on the average number of rejects.  One can show that if
$p_Y(X)$ is the probability for an element of $Y$ to be in $X$, the 
average number of rejects is $1/p_Y(X)$.

Concretely, we will show that the probability $p_n$ for a size $n$
labeled $A$-graph to be connected and $1$-trim (rigorously:
$\lambda(1)$-trim) tends to 1 when $n$ tends to infinity
(Section~\ref{sec:connected} and~\ref{sec:trim}).  This justifies the
use of a rejection algorithm since the average number of rejects tends
to $0$ when $n$ tends to infinity.

There remains one problem: such a rejection algorithm will generate 
labeled connected 1-trim $A$-graphs, and we are interested only in 
the information contained in $(\Gamma,\lambda(1))$ -- that is, we do 
not care how the $n-1$ vertices different from $\lambda(1)$ are 
labeled. In other words, an admissible pair, obtained from a labeled 
$A$-graph by forgetting the labeling of the vertices numbered 2 to 
$n$, will be counted several times. The following lemma shows that 
this is not an obstacle.

% Let $v$ be a vertex of $\Gamma$.  We say that the pair $(\Gamma,v)$ is
% labeled if it is equipped with a bijection $\lambda\colon
% \{1,\ldots,n\} \rightarrow V$ such that $\lambda(1) = v$.  Again, there
% are $(n-1)!$ such labelings. We claim the following.

\begin{lemma}\label{fact labeled}
    Let $\Gamma$ be an $A$-graph of size $n$ and let $v$ be a vertex
    of $\Gamma$.  If $\Gamma$ is connected, then there are $(n-1)!$
    isomorphism classes of labeled structures on $\Gamma$ such that
    vertex $v$ is labeled 1.
\end{lemma}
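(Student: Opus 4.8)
The statement counts labeled structures on a fixed connected $A$-graph $\Gamma$ up to isomorphism, subject to the constraint that the designated vertex $v$ receives label $1$. The plan is to relate the set of \emph{all} labelings with $\lambda(v)=1$ to its quotient under graph automorphisms, and to exploit the fact that a connected $A$-graph has a trivial automorphism group once a base vertex is pinned down. First I would observe that a labeling $\lambda\colon\inter[1,n]\to V(\Gamma)$ with $\lambda(v)=1$ is determined by the bijection it induces from $\inter[2,n]$ onto $V(\Gamma)\setminus\{v\}$, so there are exactly $(n-1)!$ such labelings. Two of them, $\lambda$ and $\lambda'$, yield isomorphic labeled structures precisely when there is an automorphism $\varphi$ of the $A$-graph $\Gamma$ with $\varphi\circ\lambda=\lambda'$; since both fix the label of $v$, such a $\varphi$ must fix the vertex $v$ itself.

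The key step is then to show that the only automorphism of a connected $A$-graph $\Gamma$ that fixes a vertex $v$ is the identity. This is where the defining conditions on $A$-graphs do the work: the two determinism conditions (an edge label $a$ leaving a vertex, and an edge label $a$ entering a vertex, are unique) mean that from $v$ one can reach every other vertex by a path, and the sequence of labels and directions along that path determines the endpoint uniquely. Formally I would argue by induction on the distance from $v$ in the underlying undirected graph: if $\varphi$ fixes $v$ and fixes every vertex at distance $\le k$, then for a vertex $w$ at distance $k+1$ pick an edge $(u,a,w)$ or $(w,a,u)$ with $u$ at distance $k$; the determinism condition forces $\varphi(w)$ to be the unique vertex related to $\varphi(u)=u$ in the same way, hence $\varphi(w)=w$. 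Connectedness guarantees every vertex is reached, so $\varphi=\mathrm{id}$.

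With the automorphism group trivial (relative to the pinned vertex $v$), the action of $\mathrm{Aut}(\Gamma)$ on the set of $(n-1)!$ admissible labelings is free, and distinct labelings lie in distinct isomorphism classes. Hence the number of isomorphism classes of labeled structures on $\Gamma$ with $v$ labeled $1$ is exactly $(n-1)!$, as claimed. I expect the main obstacle to be stating cleanly the rigidity argument in the previous paragraph — in particular, being careful that ``automorphism of the $A$-graph'' means a vertex bijection preserving the labeled edge set in both directions, and that the determinism axioms are precisely what make propagation along paths well-defined; once that is set up, the induction and the free-action conclusion are routine.
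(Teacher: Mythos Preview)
Your proposal is correct and follows essentially the same approach as the paper. The paper's proof constructs the doubled graph $\tilde\Gamma$ over $A\cup\bar A$ and observes that each vertex $w$ is the unique endpoint of a path from $v$ with a given label word $u_w$, which is exactly your rigidity/propagation argument (induction on distance using the two determinism axioms) recast in the language of words; your framing via the triviality of the pointed automorphism group makes the logic a bit more explicit, but the underlying idea is identical.
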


\begin{proof}
Let $\bar A = \{\bar a \mid a \in A\}$ be a disjoint copy of $A$, and
let $\tilde\Gamma$ be obtained from $\Gamma$ by adding, for each edge
$(x,a,y)$, a new edge $(y,\bar a,x)$.  It is immediately verified that
$\tilde\Gamma$ is an $(A\cup\bar A)$-graph.  For each vertex $w\ne v$,
let $u_w$ be a finite word on the alphabet $A \cup \bar A$ labeling a
path in $\tilde\Gamma$ from $v$ to $w$.  Then $w$ is the unique vertex
accessible from $v$ following a path labeled $u_w$, and the words
$u_w$ are pairwise distinct.  This observation guarantees that
distinct labelings of $\Gamma$ mapping 1 to $v$, are non isomorphic.
\end{proof}

Thus each size $n$ admissible pair $(\Gamma,1)$ is counted the same
number of times, namely $(n-1)!$ times.  Therefore, applying a
rejection algorithm that randomly generates size $n$ labeled
connected 1-trim $A$-graphs and forgetting labels $2$ to $n$ also
guarantees a random generation of admissible pairs for the uniform
distribution on all admissible pairs of size $n$.

Summarizing the algorithmic strategy, we will randomly and equally
likely generate a labeled $A$-graph, reject it if it is not connected
and 1-trim, draw another labeled $A$-graph, etc, until we draw a
connected 1-trim labeled $A$-graph.  We then ignore the labeling of
the vertices numbered 2 to $n$.  Details on the algorithm and its
complexity are discussed in Section~\ref{sec:randomgeneration}.

For convenience, we will call a labeled $A$-graph $\Gamma$
\textit{admissible} if the pair $(\Gamma,1)$ is admissible in the
sense of Section~\ref{sec: representation}.

%%%%%%%%%%%%%%%%%%%%%%%%
\section{Enumeration of $A$-graphs}\label{sec:enumeration}

We first observe that in an $A$-graph $\Gamma = (V,E)$, for each $a\in
A$, the edges in $E$ of the form $(x,a,y)$ can be interpreted as the
description of a partial injection from $V$ to $V$ (partial means that
the domain of this injection is a subset of $V$).  If the $A$-graph is
labeled and has size $n$, each letter can therefore be interpreted as
a partial injection from $\inter[1,n]$ to itself.  The labeled
$A$-graph itself can then be seen as an $A$-tuple of partial
injections.

In this section, we discuss the enumeration of partial injections on
$\inter[1,n]$, the probability for an $A$-tuple of such partial
injections to yield a labeled connected $A$-graph, the probability for
that graph to be 1-trim, and finally the number of size $n$ subgroups.

%%%%%%%%%%%%%%%%%%%%%%%%
\subsection{Partial injections and $A$-graphs}\label{sec:A-graphs}

Each partial injection is a set of disjoint cycles and non-empty
sequences (in analogy to the decomposition of a permutation as a union
of cycles).  The EGS for cycles is $\log(\frac{1}{1-z})$, and that for
sequences is $\frac{1}{1-z}$ (Section~\ref{sec:EGS}).  It follows that
the EGS for non-empty sequences is $\frac{1}{1-z} -1 = \frac{z}{1-z}$,
and the EGS for the union of the (disjoint) classes of cycles and
non-empty sequences is $\frac{z}{1-z} + \log(\frac{1}{1-z})$.  Then
Proposition~\ref{EGS calculus} shows that the EGS for partial
injections is
$$I(z) = \exp \left( \frac{z}{1-z} + \log \Big(\frac{1}{1-z}\Big)\right) =
\frac{1}{1-z}\,\exp\left(\frac{z}{1-z}\right).$$
Let $I_n$ be the number of partial injections from $\inter[1,n]$ to
itself, so that $I(z) = \sum_n \frac{I_n}{n!}z^n$.

\begin{remark}
    The series $I(z)$ turns out to be also the (ordinary) generating 
    series of the average number of increasing subsequences in a 
    random permutation.
    The first values of the sequence $(I_n)_{n\geq 0}$, referenced
    \textit{EIS} \textbf{A002720} in~\cite{sloane}  are
    $$1,\ 2,\ 7,\ 34,\ 209,\ 1546,\ 13327,\ 130922,\ 1441729,\
    17572114,\ 234662231, \ldots$$
\end{remark}    

The above expression of $I(z)$ yields a simple recurrence relation for
the sequence $(I_n)_n$.  Indeed, we find that the series
$I'(z)=\sum_{n\geq 0}\frac{I_{n+1}}{n!}z^n$ is equal to
$$I'(z)=\frac{2-z}{(1-z)^3} \,\exp\left(\frac{z}{1-z}\right) =\frac{2-z}{(1-z)^2}
\, I(z).$$
Thus $(1-z)^2 I'(z) = (2-z) I(z)$ and the following recurrence
relation follows:
\begin{equation} \label{eq:rec}
\forall n \geq 2, \quad I_n = 2 n \, I_{n-1}-(n-1)^2I_{n-2},
\end{equation}
with $I_0 = 1$ and $I_1 = 2$.

\begin{lemma} \label{minomajo}
    For each integer $n \geq 1$, we have
    $$(n+1)!  \enspace\le\enspace (n+1) I_{n-1} \enspace\le\enspace
    I_n \enspace\le\enspace n\, e^{1/\sqrt{n}} I_{n-1}
    \enspace\le\enspace n!\ e^{2\sqrt{n}-1}.$$
\end{lemma}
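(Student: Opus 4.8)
The plan is to prove the five inequalities by a combination of the recurrence relation~(\ref{eq:rec}) and elementary induction, keeping everything as self-contained as possible. I would first dispense with the two outer bounds, which are essentially decorative consequences of the inner ones. The leftmost inequality $(n+1)! \le (n+1)I_{n-1}$ is just the statement $n! \le I_{n-1}$, which is immediate since every permutation of $\inter[1,n-1]$ is a partial injection, so $I_{n-1} \ge (n-1)!$; wait, this gives $I_{n-1}\ge (n-1)!$, not $n!$ — so actually one needs $I_{n-1}\ge n!$, equivalently $I_{n-1}/(n-1)! \ge n$. This does hold for $n\ge 1$ because the number of partial injections on an $(n-1)$-set, divided by $(n-1)!$, is $\sum_{k\ge 0}\binom{n-1}{k}^2/\ldots$ — more simply, $I_m \ge (m+1)!$ can be proved directly by induction from~(\ref{eq:rec}), and that is cleaner: check $I_0=1\ge 1!$, $I_1 = 2 \ge 2!$, and if $I_{m-1}\ge m!$ and $I_{m-2}\ge(m-1)!$ then $I_m = 2mI_{m-1}-(m-1)^2 I_{m-2} \ge 2m\cdot m! - (m-1)^2 I_{m-2}$, and one bounds $I_{m-1}$ from above in terms of $I_{m-2}$ — this is circular. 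So instead I would establish the ratio bounds first, then derive $(n+1)!\le(n+1)I_{n-1}$ from them.

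The heart of the lemma is the chain $(n+1)I_{n-1}\le I_n \le n\,e^{1/\sqrt n}I_{n-1}$, i.e. two-sided control of the ratio $\rho_n := I_n/I_{n-1}$. From~(\ref{eq:rec}), dividing by $I_{n-1}$, we get $\rho_n = 2n - (n-1)^2/\rho_{n-1}$. I would prove by induction on $n$ the two estimates $\rho_n \ge n+1$ and $\rho_n \le n\,e^{1/\sqrt n}$ simultaneously. For the lower bound: assuming $\rho_{n-1}\ge n$ (note $(n-1)+1 = n$), we get $\rho_n = 2n - (n-1)^2/\rho_{n-1} \ge 2n - (n-1)^2/n = 2n - n + 2 - 1/n = n+2-1/n \ge n+1$ for $n\ge 1$; the base case $\rho_1 = I_1/I_0 = 2 \ge 2$ holds. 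For the upper bound the natural target is something like $\rho_n \le n + \sqrt n$ (since $n\,e^{1/\sqrt n} \ge n(1+1/\sqrt n) = n+\sqrt n$, and in fact $n\,e^{1/\sqrt n}$ is a hair larger, which gives slack). Assuming $\rho_{n-1}\ge n$ we get $\rho_n = 2n - (n-1)^2/\rho_{n-1} \le 2n - (n-1)^2/\rho_{n-1}$; but to make the subtracted term small we want a \emph{lower} bound on $\rho_{n-1}$, which we have: $\rho_{n-1}\le n+\sqrt n$ would be wrong direction... let me reconsider — to upper-bound $\rho_n = 2n - (n-1)^2/\rho_{n-1}$ we need a \emph{lower} bound on $\rho_{n-1}$, namely $\rho_{n-1}\ge n$, giving $\rho_n \le 2n - (n-1)^2/(n-1+\sqrt{n-1})$ if we use $\rho_{n-1}\le n-1+\sqrt{n-1}$... no: larger $\rho_{n-1}$ makes $(n-1)^2/\rho_{n-1}$ smaller makes $\rho_n$ larger, so the upper bound on $\rho_n$ comes from the \emph{upper} bound on $\rho_{n-1}$. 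So assume inductively $\rho_{n-1}\le (n-1) + \sqrt{n-1}$; then $\rho_n \le 2n - \dfrac{(n-1)^2}{(n-1)+\sqrt{n-1}} = 2n - \dfrac{(n-1)^{3/2}}{\sqrt{n-1}+1}\cdot\sqrt{n-1}\cdot\dfrac{1}{\sqrt{n-1}}$ — cleaner: $\dfrac{(n-1)^2}{(n-1)+\sqrt{n-1}} = \dfrac{(n-1)^{3/2}}{\sqrt{n-1}+1} = (n-1) - \sqrt{n-1} + \dfrac{\sqrt{n-1}}{\sqrt{n-1}+1}$, hence $\rho_n \le 2n - (n-1) + \sqrt{n-1} - \dfrac{\sqrt{n-1}}{\sqrt{n-1}+1} = n+1 + \sqrt{n-1} - \dfrac{\sqrt{n-1}}{\sqrt{n-1}+1}$, and one checks this is $\le n + \sqrt n$, i.e. $1 + \sqrt{n-1} - \sqrt n \le \dfrac{\sqrt{n-1}}{\sqrt{n-1}+1}$, which holds since $\sqrt{n-1} - \sqrt n < 0 < \dfrac{\sqrt{n-1}}{\sqrt{n-1}+1} - 1$... this last step needs a careful but routine check; I'd verify it directly. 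Then $n+\sqrt n \le n\,e^{1/\sqrt n}$ since $e^{x}\ge 1+x$ with $x = 1/\sqrt n$.

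Finally, the rightmost inequality $n\,e^{1/\sqrt n} I_{n-1} \le n!\,e^{2\sqrt n - 1}$: telescoping the ratio bound gives $I_n \le n!\,\prod_{k=1}^n e^{1/\sqrt k}$ (using $\rho_k \le k\,e^{1/\sqrt k}$ and $I_0 = 1$), so $I_{n-1}\le (n-1)!\,\exp\big(\sum_{k=1}^{n-1}1/\sqrt k\big)$, whence $n\,e^{1/\sqrt n}I_{n-1} \le n!\,\exp\big(\sum_{k=1}^{n}1/\sqrt k\big)$, and one bounds $\sum_{k=1}^n 1/\sqrt k \le 1 + \int_1^n t^{-1/2}\,dt = 1 + 2\sqrt n - 2 = 2\sqrt n - 1$ by the standard integral comparison (the summand is decreasing). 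That yields the claimed bound. Then, going back, $(n+1)I_{n-1} \le I_n$ combined with $I_n \ge$ (an easy explicit lower bound) gives $(n+1)! \le (n+1)I_{n-1}$: indeed iterating $\rho_k \ge k+1$ down to $I_0 = 1$ gives $I_{n-1}\ge \prod_{k=1}^{n-1}(k+1) = n!$, hence $(n+1)I_{n-1}\ge (n+1)\cdot n! = (n+1)!$. The only real subtlety is choosing the right inductive hypothesis for the upper ratio bound — a naive $\rho_n \le n\,e^{1/\sqrt n}$ does not obviously propagate, so one sharpens to $\rho_n \le n + \sqrt n$ (or similar) to close the induction; that is the step I expect to require the most care, but it reduces to an elementary inequality about square roots.
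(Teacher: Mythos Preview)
Your lower-bound induction and the telescoping/integral-comparison pieces are correct and match the paper's argument (your derivation of $\rho_n \ge n+1$ from $\rho_{n-1}\ge n$ is in fact a bit cleaner than the paper's version). The genuine gap is in the upper bound.

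Your plan is to strengthen the inductive hypothesis to $\rho_n \le n + \sqrt n$ and then recover $\rho_n \le n\,e^{1/\sqrt n}$ from $e^x \ge 1+x$. But $\rho_n \le n+\sqrt n$ is simply \emph{false}: already $\rho_2 = I_2/I_1 = 7/2 = 3.5 > 2+\sqrt 2 \approx 3.414$, and the inequality continues to fail for $n=3,4,5,\ldots$. The suspicious step you flagged, $1+\sqrt{n-1}-\sqrt n \le \dfrac{\sqrt{n-1}}{\sqrt{n-1}+1}$, is indeed the culprit: rewriting the right side as $1 - \dfrac{1}{\sqrt{n-1}+1}$ and the left side via $\sqrt n - \sqrt{n-1} = \dfrac{1}{\sqrt n + \sqrt{n-1}}$, the inequality reduces to $\sqrt{n-1}+1 \ge \sqrt n + \sqrt{n-1}$, i.e.\ $\sqrt n \le 1$, which fails for all $n\ge 2$. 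So the induction cannot close with this hypothesis.

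The reason is that $n\,e^{1/\sqrt n} = n + \sqrt n + \tfrac12 + O(n^{-1/2})$, and that extra $\tfrac12$ is not slack you can throw away. The paper inducts directly on $\rho_n \le n\,e^{1/\sqrt n}$: from $\rho_{n+1} = 2(n+1) - n^2/\rho_n \le 2(n+1) - n\,e^{-1/\sqrt n}$ it remains to show $2(n+1) \le n\,e^{-1/\sqrt n} + (n+1)\,e^{1/\sqrt{n+1}}$, and this follows from the cubic Taylor bound $e^x \ge 1+x+\tfrac{x^2}{2}+\tfrac{x^3}{6}$ applied at $x=\pm 1/\sqrt n$ and $x=1/\sqrt{n+1}$, after a short algebraic reduction. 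In other words, the ``naive'' hypothesis you dismissed does propagate, but only once you exploit the quadratic (and cubic) terms of $e^x$ rather than just $e^x \ge 1+x$.
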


\begin{proof}
We proceed by induction on $n$, noting that $I_0 = 1$.  The inequality
$(n+1)I_{n-1} \le I_n$ is verified for $n = 1$ and $n = 2$.  Suppose
that $n\ge 2$ and $(n+1)I_{n-1} \le I_n$.  By the recurrence relation,
we have
\begin{eqnarray*}
    I_{n+1} \enspace=\enspace (2n + 2) I_n - n^2 I_{n-1} &=& (2n + 2) 
    I_n - (n + 1)^2 I_{n-1} + (2n + 1)I_{n-1} \\
    &\ge& (2n+2) I_n - (n+1)I_n + (2n+1) I_{n-1} \\
    &\ge& (n+1) I_n + 2n I_{n-1} \\
    &\ge& (n+1) I_n + I_n \enspace=\enspace (n+2)I_n,
\end{eqnarray*}
with the last inequality derived from the recurrence relation on the
$I_n$.  Thus, for each $n\ge 1$, we have $I_n \ge (n+1)I_{n-1}$.  The
inequality $(n+1)!  \le (n+1)I_{n-1}\le I_n$ follows immediately.

For $n \geq 1$ let $u_n=\frac{I_n}{I_{n-1}}$.  We proceed by induction
on $n$ to prove that $u_n \leq n e^{1/\sqrt{n}}$, noting that $u_1 = 2
\leq e$.  From Equation~(\ref{eq:rec})
$$u_{n+1} = 2(n+1)-\frac{n^2}{u_n} \leq 2(n+1) -n e^{-1/\sqrt{n}}$$
It remains to show that 
$$2(n+1) -n e^{-1/\sqrt{n}} \leq (n+1) e^{1/\sqrt{n+1}}.$$
or
$$2(n+1) \leq n e^{-1/\sqrt{n}} + (n+1) e^{1/\sqrt{n+1}}.$$
For any real number $x$,
$$e^x \geq 1+x+\frac{x^2}{2}+\frac{x^3}{6}.$$ 
Therefore
$$n e^{-1/\sqrt{n}} + (n+1) e^{1/\sqrt{n+1}} \geq 2(n+1) +
\sqrt{n+1}-\sqrt{n} +\frac{1}{6\sqrt{n+1}} - \frac{1}{6\sqrt{n}}$$
and we want to show that
$$\sqrt{n+1} - \sqrt{n} + \frac{1}{6\sqrt{n+1}} - \frac{1}{6\sqrt{n}}
= \sqrt{n+1}-\sqrt{n} +
\frac{\sqrt{n}-\sqrt{n+1}}{6\sqrt{n}\sqrt{n+1}} \ge 0,$$
or equivalently,
\begin{eqnarray*}
    6(n+1)\sqrt n - 6n\sqrt{n+1} + \sqrt n - \sqrt{n+1} &\ge& 0 \\
    (6n+7)\sqrt n &\ge& (6n+1)\sqrt{n+1}\\
    (6n+7)^2 n &\ge& (6n+1)^2(n+1).
\end{eqnarray*}
Now the difference $\left((6n+7)^2 n\right) - 
\left((6n+1)^2(n+1)\right)$ is equal to $36n^2+36n-1$, which is 
positive for all $n\ge 1$. This completes the proof that $u_n \leq n 
e^{1/\sqrt{n}}$ for all $n\ge 1$.

Consequently $I_n \enspace\le\enspace n\, e^{1/\sqrt{n}} I_{n-1}$ and
$$I_{n}=\frac{I_n}{I_0}=\prod_{i=1}^nu_i\leq n! \,
e^{\sum_{i=1}^n\frac{1}{\sqrt{i}}}.$$
As the function $x \mapsto \frac{1}{\sqrt{x}}$ is decreasing on the
positive domain, we find that $\frac{1}{\sqrt{i+1}} \leq \int_i^{i+1}
\frac{dx}{\sqrt{x}}$ for each $i \geq 1$, and
$$\sum_{i=2}^n\frac{1}{\sqrt{i}} =
\sum_{i=1}^{n-1}\frac{1}{\sqrt{i+1}}
\le  \int_1^n \frac{dx}{\sqrt{x}}=2\sqrt{n}-2.$$ 
Thus $e^{\sum_{i=1}^n\frac{1}{\sqrt{i}}}\leq e^{2\sqrt{n}-1}$, which
concludes the proof.
\end{proof}

\begin{remark}\label{rem:boltzmann2}
    The computation of $I(z)$ allows us to justify our assertion that
    Boltzmann samplers are less efficient than the random generation
    based on the recursive method, see Remark~\ref{rem:boltzamnn1}.
    More precisely, the behavior of Boltzmann samplers is often such
    that the size of the generated object sits between
    $(1-\varepsilon)n$ and $(1+\varepsilon)n$ with high probability.
    In our case, it is essential that the tuple of partial injections
    we generate all have the same size.  It is often the case that a
    Boltzmann sampler can be used to produce an exact-size sampler,
    using a rejection algorithm.  In the case of partial injections
    however, as the distribution of the sizes of partial injections is
    not sufficiently concentrated around the mean size, each draw of a
    random partial injection of size exactly $n$ takes times
    $\O(n^{7/4})$, which is not very satisfactory.  Here is why.
    
    The mean size of a partial injection produced under the
    exponential Boltzmann model is (see \cite[Proposition 1]{dfls04}):
    $$\mathbb{E}_x(\textrm{size of a partial injection}) = x
    \frac{I'(x)}{I(x)} = x\frac{2-x}{(1-x)^2}$$
    and its variance is
    $$\sigma^2_x(\textrm{size of a partial injection}) = \frac{d}{dx}
    \mathbb{E}_x(\textrm{size of a partial injection}) =
    \frac{2}{(1-x)^3}.$$
    Thus, to generate partial injections of expected size
    $\mathbb{E}_x=n$, one has to choose $x=1-1/\sqrt{n+1}$.  In this
    case , $\sigma^2_x=2(n+1)^{3/2}$.  From \cite[Theorem 4]{dfls04}
    dealing with H-admissible generating functions (see Section
    \ref{sec:saddlepoint}) the exact-size generation requires $\sqrt{2
    \pi} \sigma_x=\O(n^{3/4})$ rejections in average and the overall
    cost of exact-size sampling is $\O(n \sigma_x)=\O(n^{7/4})$ in
    average.
\end{remark}
As discussed at the beginning of this section, if $r = |A|$, then
a labeled $A$-graph of size $n$ can be assimilated to a $r$-tuple of
partial injections on $\inter[1,n]$, so the EGS of labeled $A$-graphs
is
$$\sum_{n \geq 0}\frac{I^r_n}{n!} \ z^n.$$

%%%%%%%%%%%%%%%%%%%%%%%%
\subsection{Connectedness}\label{sec:connected}

Recall that an $A$-graph is connected if the underlying
undirected graph is connected.  In this section, we show the following
result.

\begin{theorem}\label{asympt connected}
    Let $A$ be an alphabet of cardinality $r\geq 2$ and let $p_n$ be
    the probability for an $n$-vertex labeled $A$-graph to be
    connected.  Then $\lim_{n\mapsto\infty}p_n = 1$ and more 
    precisely, $p_n = 1 -\frac{2^r}{n^{r-1}} +o(\frac{1}{n^{r-1}})$.
\end{theorem}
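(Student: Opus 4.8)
The plan is to relate the enumeration of connected labeled $A$-graphs to that of all labeled $A$-graphs through the exponential formula, convert this into a convolution recurrence for the connectedness probabilities, and then read off the asymptotics using the two-sided bounds on $I_n$ supplied by Lemma~\ref{minomajo}.

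First I would set up the generating-function identity. Let $\hat I(z)=\sum_{n\ge0}\frac{I_n^r}{n!}z^n$ be the EGS of labeled $A$-graphs (as in Section~\ref{sec:A-graphs}), let $C_n$ be the number of \emph{connected} labeled $A$-graphs of size $n$, and let $C(z)=\sum_{n\ge1}\frac{C_n}{n!}z^n$ be their EGS. A labeled $A$-graph is precisely a \emph{set} of connected labeled $A$-graphs --- its connected components --- so Proposition~\ref{EGS calculus} gives $\hat I(z)=\exp\big(C(z)\big)$. Differentiating, $\hat I'(z)=C'(z)\,\hat I(z)$, and extracting the coefficient of $z^{n-1}$ yields, after clearing factorials,
\[
I_n^r=\sum_{j=1}^n\binom{n-1}{j-1}\,C_j\,I_{n-j}^r\qquad(n\ge1).
\]
Taking $n=1$ gives $C_1=I_1^r=2^r$ (a one-vertex labeled $A$-graph amounts to a choice, for each of the $r$ letters, of whether a loop is present, and all $2^r$ of these are connected). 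Dividing by $I_n^r$ and setting $p_j=C_j/I_j^r$ --- so $p_j$ is the probability that a size-$j$ labeled $A$-graph is connected, and $p_n$ is the quantity we want --- we obtain
\[
1=\sum_{j=1}^{n}\binom{n-1}{j-1}\left(\frac{I_j\,I_{n-j}}{I_n}\right)^{\!r}p_j .
\]

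I would then isolate the two largest summands. The $j=n$ term is exactly $p_n$ (recall $I_0=1$), and the $j=n-1$ term is $(n-1)\,2^r\,(I_{n-1}/I_n)^r\,p_{n-1}$; since Lemma~\ref{minomajo} gives $n+1\le I_n/I_{n-1}\le n\,e^{1/\sqrt n}$, one has $(n-1)(I_{n-1}/I_n)^r=\frac{1}{n^{r-1}}(1+o(1))$, so this term equals $\frac{2^r}{n^{r-1}}(1+o(1))\,p_{n-1}$. Hence it remains only to show that the tail $E_n:=\sum_{j=1}^{n-2}\binom{n-1}{j-1}\left(\frac{I_jI_{n-j}}{I_n}\right)^{r}p_j$ is $o(n^{-(r-1)})$. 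Granting this, $1-p_n=\frac{2^r}{n^{r-1}}(1+o(1))\,p_{n-1}+o(n^{-(r-1)})$; this already forces $1-p_n=\O(n^{-(r-1)})\to0$, hence $p_n\to1$, and feeding $p_{n-1}\to1$ back in yields $p_n=1-\frac{2^r}{n^{r-1}}+o(n^{-(r-1)})$, as claimed. (The hypothesis $r\ge2$ is used here, both for $\frac{2^r}{n^{r-1}}\to0$ and for the estimate on $E_n$.)

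Bounding $E_n$ is the technical heart of the argument and the step I expect to be the main obstacle. Since $p_j\le1$, it suffices to estimate $\sum_{j=1}^{n-2}\binom{n-1}{j-1}(I_jI_{n-j}/I_n)^r$. Telescoping the inequality $I_m/I_{m-1}\ge m+1$ from Lemma~\ref{minomajo} gives $I_{n-j}/I_n\le (n-j+1)!/(n+1)!$ and $I_j/I_n\le (j+1)!/(n+1)!$; combining these with $I_j\le j!\,e^{2\sqrt j}$, resp. $I_{n-j}\le (n-j)!\,e^{2\sqrt{n-j}}$, yields the two complementary bounds
\[
\frac{I_j\,I_{n-j}}{I_n}\le\frac{e^{2\sqrt j}}{\binom{n+1}{j}}
\qquad\mbox{and}\qquad
\frac{I_j\,I_{n-j}}{I_n}\le\frac{e^{2\sqrt{n-j}}}{\binom{n+1}{j+1}} .
\]
The first is sharp when $j$ is small (so that $e^{2\sqrt j}=\O(1)$) and the second when $n-j$ is small; using either one over the whole range $1\le j\le n-2$ would produce a spurious factor $e^{\Theta(\sqrt n)}$, which is precisely why the range must be split. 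So I would bound the range $1\le j\le\lfloor n/2\rfloor$ with the first estimate together with $\binom{n-1}{j-1}\le\frac jn\binom{n+1}{j}$, and the range $\lfloor n/2\rfloor<j\le n-2$ with the second (writing $k=n-j$ and using $\binom{n-1}{k}\le\binom{n+1}{k}$). In the first range one is left with a sum of the shape $\frac1n\sum_{k\ge1}k\,e^{2r\sqrt k}\binom{n+1}{k}^{-(r-1)}$, which --- using $\binom{n+1}{k}\ge\big((n+1)/k\big)^{k}\ge2^{k}$ for $k\le(n+1)/2$ to control the tail --- is dominated by its $k=1$ term and is $\O(n^{-r})$; in the second range one similarly gets $\sum_{k\ge2}e^{2r\sqrt k}\binom{n+1}{k}^{-(r-1)}=\O(n^{-2(r-1)})$, dominated by $k=2$. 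Since $r\ge2$, this gives $E_n=\O(n^{-r})=o(n^{-(r-1)})$, completing the proof.
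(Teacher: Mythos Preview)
Your argument is correct and takes a genuinely more elementary route than the paper's. Both start from the exponential formula $\hat I(z)=\exp C(z)$, but the paper then invokes Bender's theorem (Theorem~\ref{thbender}) to extract $c_n=j_n-2^rj_{n-1}+\O(j_{n-2})$, and afterwards appeals to the saddlepoint estimate of Proposition~\ref{prop cor du col} to turn $j_{n-1}/j_n$ into $n^{-(r-1)}(1+o(1))$. You bypass both: differentiating $\exp C(z)$ gives the convolution identity directly, and the two-sided inequalities $n+1\le I_n/I_{n-1}\le n\,e^{1/\sqrt n}$ from Lemma~\ref{minomajo} alone already yield $(n-1)(I_{n-1}/I_n)^r=n^{-(r-1)}(1+o(1))$, with no need for saddlepoint asymptotics. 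The technical core of the two proofs is the same --- bounding a convolution tail of the shape $\sum_j\binom{n-1}{j-1}(I_jI_{n-j}/I_n)^r$ --- and the paper's verification of Bender's hypothesis (split at $n/6$, Stirling, etc.) is just as laborious as your $E_n$ estimate. What your route buys is independence from Proposition~\ref{prop cor du col} and from the Bender black box; what the paper's route buys is that Bender's theorem, once its hypotheses are checked, delivers a full asymptotic expansion to any order $s$, should one want more terms.

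One small caveat on your tail estimate: the claim that $\frac1n\sum_{k\ge1}k\,e^{2r\sqrt k}\binom{n+1}{k}^{-(r-1)}$ is dominated by its $k=1$ term and is $\O(n^{-r})$ is true, but the bound $\binom{n+1}{k}\ge2^k$ by itself is not enough --- it gives only an $n$-independent constant for the tail $k\ge2$. You need the sharper $\binom{n+1}{k}\ge((n+1)/k)^k$ throughout, together with (for instance) a ratio test showing the terms decrease geometrically for $k$ up to a fixed fraction of $n$, the remaining range being exponentially small. This is routine and very much in the spirit of the paper's own split, but it deserves one more sentence than you gave it.
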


In particular, this shows that labeled $A$-graphs are asymptotically
connected.  The proof of Theorem~\ref{asympt connected}, given below,
relies on the following theorem, due to Bender (see \cite[p.
497]{ben74} for a survey and \cite{ben7475} for a complete proof).

\begin{theorem} \label{thbender}
    Let $F(z,y)$ be a two-variable real function which is analytic at
    $(0,0)$.  Let $J(z)=\sum_{n>0}j_nz^n$, $C(z)=\sum_{n>0}c_nz^n$ and
    $D(z)=\sum_{n>0}d_nz^n$ be functions such that
    $$ C(z)=F(z, J(z)) \quad \mbox{and} \quad D(z)=\frac{\partial
    F}{\partial y}(z, J(z)).$$
    If the sequence $(j_n)_{n>0}$ satisfies $j_{n-1}=o(j_n)$, and if
    for some $s \geq 1$, we have $\sum_{k=s}^{n-s}|j_kj_{n-k}| =
    \O(j_{n-s})$, then
    $$c_n = \sum_{k=0}^{s-1}d_kj_{n-k} + \O\left(j_{n-s}\right).$$
\end{theorem}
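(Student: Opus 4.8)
The plan is to work entirely at the level of formal power series and to extract $c_n=[z^n]C(z)$ by isolating, inside $C(z)=F(z,J(z))$, the contributions in which a single ``large'' coefficient of $J$ appears linearly. Two features of the hypotheses drive everything. First, analyticity of $F$ at $(0,0)$ gives, via Cauchy's estimates on a polydisk, constants $M,\rho,\sigma>0$ with $|[z^i]\phi_j|\le M\rho^{-i}\sigma^{-j}$, where $F(z,y)=\sum_{j\ge 0}\phi_j(z)y^j$; the decay in $j$ will let me resum over all powers. Second, $j_{n-1}=o(j_n)$ forces $J$ to grow faster than every geometric sequence, so that $R^{-n}=o(j_n)$ for each fixed $R>0$ and, more usefully, $\sum_{i\ge 0}\rho^{-i}j_{N-i}=\O(j_N)$ for each fixed $\rho>0$ (in a super-geometric series one endpoint dominates its neighbours). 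Since $j_0=0$ the compositions are legitimate formal series, with $C=\sum_{j\ge 0}\phi_jJ^{\,j}$ and $D=\sum_{j\ge 1}j\,\phi_jJ^{\,j-1}$.

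The combinatorial core is an estimate for the coefficients of the powers of $J$: for each fixed $j\ge 1$,
$$[z^m]J(z)^{\,j}\;=\;j\sum_{k=0}^{s-1}[z^k]\big(J(z)^{\,j-1}\big)\,j_{m-k}\;+\;\O(j_{m-s}),$$
together with the byproduct $[z^m]J^{\,j}=\O(j_m)$. I would prove both by induction on $j$, the case $j=1$ being trivial. For the step, write $J^{\,j}=J\cdot J^{\,j-1}$ and split the convolution $\sum_{k'}j_{k'}[z^{m-k'}]J^{\,j-1}$ into three ranges: small $k'<s$ (recurse on $J^{\,j-1}$), large $m-k'<s$ (the leading factor $J$ carries the big index and produces the displayed sum), and the middle $s\le k'\le m-s$, which is exactly where the hypothesis $\sum_{k=s}^{m-s}|j_kj_{m-k}|=\O(j_{m-s})$ is invoked, after feeding in the inductive bound $[z^{m-k'}]J^{\,j-1}=\O(j_{m-k'})$. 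The factor $j$ arises from the symmetry between placing the large index on the leading $J$ and placing it inside $J^{\,j-1}$. Crucially $[z^k]J^{\,j-1}=0$ unless $k\ge j-1$, so the main term vanishes for $j>s$: only the finitely many powers $j\le s$ carry a genuine main term.

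I then assemble $c_n=\sum_{j}\sum_i[z^i]\phi_j\,[z^{n-i}]J^{\,j}$, splitting the outer sum at $j=s$. For the finitely many powers $j\le s$ I substitute the Powers Lemma and reindex by $k=i+l$, where $l\le s-1$ is the inner index; the accumulated coefficient of $j_{n-k}$ for $k\le s-1$ is exactly $d_k=[z^k]D(z)$ — the truncation $l\le s-1$ being vacuous there since $l\le k$, and $[z^l]J^{\,j-1}$ again forcing $j\le s$. This produces the advertised main term $\sum_{k=0}^{s-1}d_kj_{n-k}$. It then remains to absorb three errors into $\O(j_{n-s})$: (i) the $\O(j_{n-i-s})$ remainders of the Powers Lemma, summed against $M\rho^{-i}$, giving $\sum_i\rho^{-i}\O(j_{n-i-s})=\O(j_{n-s})$ by the recorded super-geometric domination; (ii) the truncated coefficients $t_k$ attached to $j_{n-k}$ for $k\ge s$: because their large index sits on the analytic factor $\phi_j$ while $J^{\,j-1}$ supplies only bounded indices $l\le s-1$, one gets $t_k=\O(\rho^{-k})$, whence $\sum_{k\ge s}j_{n-k}t_k=\O\big(\sum_{k\ge s}\rho^{-k}j_{n-k}\big)=\O(j_{n-s})$; and (iii) the entire tail $j>s$, which I control uniformly in $j$ by introducing the resolvent $\tfrac{1}{1-J/\sigma}$: the identity $\sum_{j>s}\sigma^{-j}[z^m]J^{\,j}=[z^m]\tfrac{(J/\sigma)^{s+1}}{1-J/\sigma}$ together with a short convolution recursion yields $\sum_{j>s}\sigma^{-j}[z^m]J^{\,j}=\O(j_{m-s})$, so that $\sum_{j>s}\sum_i|[z^i]\phi_j|\,[z^{n-i}]J^{\,j}=\O(j_{n-s})$. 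The $j=0$ term $[z^n]\phi_0=\O(\rho^{-n})$ is negligible for the same super-geometric reason.

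The step I expect to be the main obstacle is the bookkeeping that keeps all three errors simultaneously $\O(j_{n-s})$ while the summation ranges over infinitely many powers of $J$. The convolution hypothesis is the only tool for the genuinely two-sided ``two large factors'' regime and must be threaded through both the induction for the Powers Lemma and the resolvent estimate; and the far-end terms (those near $k=n$, and the $j=0$ and high-$i$ coefficients of $F$) survive only because $j_{n-1}=o(j_n)$ upgrades to growth faster than any geometric rate, dominating the possibly-expanding ($\rho<1$) coefficients of $F$. Obtaining this uniform-in-$j$ control from the resolvent $1/(1-J/\sigma)$ rather than by tracking the $j$-dependence of every constant, and pinning down the scaffolding bounds $[z^m]J^{\,j}=\O(j_m)$ and $t_k=\O(\rho^{-k})$ inside the same induction, is where the real care is needed.
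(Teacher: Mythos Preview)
The paper does not prove this statement: it is quoted as a result of Bender, with a citation to \cite{ben74,ben7475} for the proof, and is then used as a black box in the proof of Theorem~\ref{asympt connected}. So there is no in-paper argument to compare against.

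That said, your outline is essentially the standard proof of Bender's theorem. Expanding $F(z,y)=\sum_j\phi_j(z)y^j$, proving by induction the ``Powers Lemma'' $[z^m]J^{\,j}=j\sum_{l<s}[z^l]J^{\,j-1}\,j_{m-l}+\O(j_{m-s})$ via the three-range split of the convolution (the middle range being exactly where the hypothesis $\sum_{k=s}^{m-s}|j_kj_{m-k}|=\O(j_{m-s})$ enters), and then reassembling to recognise $d_k=[z^k]\sum_j j\,\phi_j J^{\,j-1}$ as the coefficient of $j_{n-k}$, is precisely Bender's strategy. Your observation that $[z^l]J^{\,j-1}=0$ for $l<j-1$, so that only the powers $j\le s$ contribute a main term, is the key structural point, and your use of the resolvent $(J/\sigma)^{s+1}/(1-J/\sigma)$ to get a bound uniform in $j$ for the tail is a clean way to avoid tracking how the implicit constants in the Powers Lemma depend on $j$.

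Two places deserve a line of extra care when you write it out. First, the ``super-geometric domination'' $\sum_{i\ge 0}\rho^{-i}j_{N-i}=\O(j_N)$ needs the $\O$-constant to be uniform in $N$; this follows once you fix $\epsilon<\rho$ and $N_0$ with $j_{n-1}/j_n<\epsilon$ for $n>N_0$, but the finite tail $i>N-N_0$ must be handled separately (it is $\O(\rho^{-N})=o(j_N)$). Second, the ``short convolution recursion'' for $G=(J/\sigma)^{s+1}/(1-J/\sigma)$ is not quite as short as you suggest: writing $G=(J/\sigma)^{s+1}+(J/\sigma)G$ and feeding in $[z^m]J^{s+1}=\O(j_{m-s})$ still leaves a self-referential convolution $\sum_k j_k[z^{m-k}]G$ whose middle range again requires the hypothesis and whose control needs an inductive bound of the form $[z^m]G=\O(j_{m-s})$; spell out that induction on $m$ explicitly.
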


\proofofTheorem{asympt connected}
Let $J(z) = \sum_{n \geq 1}j_n z^n$ where $j_n = \frac{I_n^r}{n!}$.
Then the EGS of labeled $A$-graphs is $1 + J(z)$.

Decomposing these graphs into their connected components (connected
components of the underlying undirected graph) and using
Proposition~\ref{EGS calculus}, we find that $1 + J(z) = \exp(C(z))$,
where $C(z) = \sum_{k\geq 1}c_k z^k$, with $c_k=\frac{C_k}{k!}$ and
$C_k$ is the number of connected labeled $A$-graphs with $k$ vertices.

It follows that $C(z)=\log(1+J(z))$.  We note that the map $F(z,y) =
\log(1+y)$ is analytic at $(0,0)$ and we let
$$D(z)=\frac{\partial F}{\partial y}(z, J(z)) =\frac{1}{1+J(z)}.$$

By Lemma \ref{minomajo}, $I_{n-1} \leq \frac{I_n}{n+1}$, and therefore
$$j_{n-1} = \frac{I_{n-1}^r}{(n-1)!} \leq
\frac{I_n^r}{(n+1)^r(n-1)!} \le \frac{I_n^r}{n^{r-1}\,n!} =
\frac{j_n}{n^{r-1}}.$$
In particular, we have
\begin{equation}\label{eqj}
    j_{n-1} = \O\left(\frac{j_n}{n^{r-1}} \right) = o\left(j_n\right).
\end{equation}

We now want to verify whether $\sum_{k=s}^{n-s} j_k j_{n-k} =
\O (j_{n-s})$ (this is the last hypothesis of Theorem~\ref{thbender},
for a fixed $s \geq 1$).  Let $S$ be the sum above.  By symmetry, we get
$$S = \sum_{k=s}^{n-s} \frac{I_k^r}{k!}\, \frac{I_{n-k}^r}{(n-k)!}
\leq 2 \left(\sum_{k=s}^{\lfloor
n/2\rfloor} \frac{I_k^r}{k!}\, \frac{I_{n-k}^r}{(n-k)!}\right).$$
We show that for $n$ large enough, 
the finite sequence $\left(\frac{I_k^r}{k!}\,
\frac{I_{n-k}^r}{(n-k)!}\right)_{s \le k \le \lfloor n/6\rfloor}$ is
decreasing.  From Equation~(\ref{eq:rec}), we have
$$\frac{I_{k+1}}{I_k} \enspace=\enspace
2(k+1)-k^2\frac{I_{k-1}}{I_{k}} \enspace\leq\enspace 2(k+1);$$
and by Lemma \ref{minomajo}, we have $\frac{I_{n-(k+1)}}{I_{n-k}} \le
\frac{1}{n-k+1}$. Therefore
$$\frac{I_{k+1}^rI_{n-(k+1)}^r}{I_k^rI_{n-k}^r} \le
\frac{2^r(k+1)^r}{(n-k+1)^r}.$$
Moreover $\frac{k!}{(k+1)!}
\,\frac{(n-k)!}{(n-k-1)!}=\frac{n-k}{k+1}$, and it follows that
\begin{eqnarray*}
    \frac{I_{k+1}^rI_{n-(k+1)}^r}{(k+1)!(n-k-1)!}\
    \frac{k!(n-k)!}{I_k^rI_{n-k}^r} &\le&
    \frac{2^r(k+1)^r}{(n-k+1)^r}\,\frac{n-k}{k+1}\\
    &\le& \frac{2^r(k+1)^{r-1}}{(n-k+1)^{r-1}} \, \frac{n-k}{n-k+1}\\
    &\le& \frac{2^r(k+1)^{r-1}}{(n-k+1)^{r-1}}.
\end{eqnarray*}    
This value is less than or equal to $1$ when $\frac{k+1}{n-k+1}$ is
less than or equal to $c = 2^{-\frac{r}{r-1}}$. Since $r\geq 2$,
we have $\frac14\leq c\leq\frac12$,  and for any $k\leq
\frac{n-3}5$, $\frac{k+1}{n-k+1}\leq\frac14\leq c$. For any $n\geq 18$, 
$\lfloor \frac{n}{6} \rfloor\leq\frac{n-3}5$, thus the sequence is
decreasing on the domain $s\le k\leq \lfloor \frac{n}6 \rfloor$.
%
%
%This value is less than or equal to $1$ when $\frac{k+1}{n-k+1}$ is
%less than or equal to $c = 2^{-\frac{r}{r-1}}$, and in particular when
%$k \le \frac{c(n+1)}{c+1}$.  Since $\frac12 \le c \le 1$, we have
%$\frac{c(n+1)}{c+1} \ge \frac n4$, and the sequence is decreasing on
%the domain $s\le k\leq \lfloor n/4 \rfloor$.
% 
%It follows that 
% 
\begin{eqnarray*}
    \sum_{k=s+1}^{\lfloor n/6\rfloor} \frac{I_{n-k}^r}{(n-k)!}\,
    \frac{I_k^r}{k!} &\le& (\lfloor n/6\rfloor- s )
    \frac{I_{s+1}^r}{(s+1)!}\, \frac{I_{n-s-1}^r}{(n-s-1)!}
\end{eqnarray*}
From Lemma \ref{minomajo}, $I_{n-s-1}^r\leq
\frac{1}{(n-s+1)^r}\,I_{n-s}^r$ and $I_{s+1}^r\leq (s+1)^r
e^{r/\sqrt{s+1}} \,I_{s}^r.$ Therefore
\begin{eqnarray*}
    \frac{I_{n-s-1}^r}{(n-s-1)!}\, \frac{I_{s+1}^r}{(s+1)!} &\le&
    \frac{(s+1)^r e^{r/\sqrt{s+1}}\ I_{s}^r\, I_{n-s}^r}{(s+1)!\
    (n-s-1)!\ (n-s+1)^r}\\
    &\le& \frac{I_{n-s}^r}{(n-s)!}\ \frac{I_{s}^r\, (s+1)^{r-1}\,
    e^{r/\sqrt{s+1}}}{s!\ (n-s+1)^{r-1}}
\end{eqnarray*}
\noindent and we have
$$\sum_{k=s+1}^{\lfloor n/6\rfloor} \frac{I_{n-k}^r}{(n-k)!}\,
\frac{I_k^r}{k!} \enspace\le\enspace (\lfloor n/6\rfloor - s )
\frac{I_{n-s}^r}{(n-s)!}\ \frac{I_{s}^r\, (s+1)^{r-1}\,
e^{r/\sqrt{s+1}}}{s!\ (n-s+1)^{r-1}},$$
which is $\O\left(\frac{I_{n-s}^r}{(n-s)!}n^{-(r-2)}\right)$.  Note
that it is $\O\left(\frac{I_{n-s}^r}{(n-s)!}\right)$ when $r=2$.

%%%%%%%%%%%%%%%%%%%

We now study the remaining part of the sum $S$.  It follows from Lemma
\ref{minomajo} that if $k\ge s$, then $I_{n-k} \leq
\frac{(n-(k-1))!}{(n-(s-1))!} I_{n-s}$, so we have
\begin{eqnarray*}
    \frac{I_{n-k}^r}{(n-k)!} &\leq&
    \frac{(n-(k-1))!^{r-1}}{(n-(s-1))!^{r-1}} \frac{(n-k+1)}{(n-s+1)}
    \frac{I_{n-s}^r}{(n-s)!}\\
    &=&\frac{(n-k)!^{r-1}}{n!^{r-1}} (n-k+1)^{r-1} \prod_{i=0}^{s-2}
    (n-i)^{r-1} \,\frac{(n-k+1)}{(n-s+1)} \frac{I_{n-s}^r}{(n-s)!}.
\end{eqnarray*}
By Lemma \ref{minomajo} again, we have $I_{k} \leq k!\
e^{2\sqrt{k}-1}$, and hence
$$\frac{I_{k}^r}{k!} \leq k!^{r-1} e^{(2\sqrt{k}-1)r}$$
It follows that
$$\frac{I_{n-k}^r}{(n-k)!} \, \frac{I_k^r}{k!} \leq
\binom{n}{k}^{-(r-1)} \frac{(n-k+1)^r}{n-s+1} \prod_{i=0}^{s-2}
(n-i)^{r-1} e^{(2\sqrt{k}-1)r} \frac{I_{n-s}^r}{(n-s)!}.$$
Note that for $s=1$, $\prod_{i=0}^{s-2} (n-i)^{r-1}=1$.

Now observe that for each $s \leq k \leq n/2$,
$\frac{(n-k+1)^r}{n-s+1}\prod_{i=0}^{s-2} (n-i)^{r-1} < n^{s(r-1)}$.
We get
\begin{eqnarray*}
    \sum_{k=\lfloor n/6\rfloor+1}^{\lfloor n/2\rfloor}
    \frac{I_{n-k}^r}{(n-k)!}\, \frac{I_k^r}{k!} &\leq& n^{s(r-1)}\,
    \frac{I_{n-s}^r}{(n-s)!}\sum_{k=\lfloor n/6\rfloor+1}^{\lfloor
    n/2\rfloor} \binom{n}{k}^{-(r-1)} e^{(2\sqrt{k}-1)r},
    %\textrm{ that is,}\\
    % &\leq&  2 \, j_{n-s} \, \left( \frac{I_s^r}{s!}+ \frac{n^{s(r-1)}}{2^{r}} \,
    %\sum_{k=s+1}^{\lfloor n/2\rfloor} \binom{n}{k}^{-(r-1)}
    %(k+1)^{2r}\right).
\end{eqnarray*}
For any $k$ such that $\lfloor n/6\rfloor+1\le k \le \lfloor
n/2\rfloor$ we have $\binom{n}{k}\geq\binom{n}{\lfloor
  n/6\rfloor}$. Using Stirling formula ($n!\sim \sqrt{2 \pi} e^{-n}
n^{n+\frac12}$) we get
\begin{eqnarray*}
    \binom{n}{\lfloor n/6\rfloor}^{-1}&\sim& \sqrt{2 \pi}
    \frac{\lfloor n/6\rfloor^{\lfloor n/6\rfloor+1/2}}{n^{n+1/2}}
    (n-\lfloor n/6\rfloor)^{n-\lfloor n/6\rfloor+1/2}\\
    &\sim& \sqrt{2\pi n} \left(\frac{\lfloor
    n/6\rfloor}{n}\right)^{\lfloor n/6\rfloor+1/2}
    \left(1-\frac{\lfloor n/6\rfloor}{n}\right)^{n-\lfloor
    n/6\rfloor+1/2}
\end{eqnarray*}
Since $\frac{\lfloor n/6\rfloor}n \le \frac16$ and $1-\frac{\lfloor
n/6\rfloor}{n} < 1$, there exists  $0<C<1$ such that
$\binom{n}{\lfloor n/6\rfloor}^{-1}\leq C^n$ for $n$ large enough.
Hence,
\begin{eqnarray*}
    \sum_{k=\lfloor n/6\rfloor+1}^{\lfloor n/2\rfloor}
    \binom{n}{k}^{-(r-1)} e^{(2\sqrt{k}-1)r} &\leq&
    \frac{n}{3}\,\binom{n}{\lfloor
    n/6\rfloor}^{-(r-1)}e^{(2\sqrt{\lfloor n/2\rfloor}-1)r} \\
    &\leq& \frac{n}3\  C^{(r-1)n}\ e^{r\sqrt{2n}}
\end{eqnarray*}    
In particular, for any $D$ such that $C<D<1$ we have
$$\sum_{k=\lfloor n/6\rfloor+1}^{\lfloor n/2\rfloor}
\binom{n}{k}^{-(r-1)} e^{(2\sqrt{k}-1)r} =
\O\left(D^{(r-1)n}\right).$$
%for any $k$
%  such that $\lfloor n/4\rfloor+1\le k \le \lfloor n/2\rfloor$, we
%  have $$\binom{n}{k}^{-(r-1)}= \O\left(C^{(r-1)n} \right)$$
Consequently
$$\sum_{k=\lfloor n/6\rfloor+1}^{\lfloor n/2\rfloor}
\frac{I_{n-k}^r}{(n-k)!}\, \frac{I_k^r}{k!} = 
\frac{I_{n-s}^r}{(n-s)!}\ \O\left(n^{s(r-1)}
D^{(r-1)n} \right) \quad \textrm{where }
0<D<1$$
%     
%Consider for a moment the coefficient $\binom{n}{k}^{-(r-1)}$.  By
%Stirling's formula ($n!\sim \sqrt{2 \pi} e^{-n} n^{n+1/2}$) we get
%\begin{eqnarray*}
%\binom{n}{k}^{-1}&\sim& \sqrt{2 \pi} \frac{k^{k+1/2}}{n^{n+1/2}} (n-k)^{n-k+1/2}\\
%&\sim& \sqrt{2\pi k} \left(\frac{k}{n}\right)^k \left(1-\frac{k}{n}\right)^{n-k+1/2}
%\end{eqnarray*}
%Moreover when $\lfloor n/4\rfloor+1\le k \le \lfloor n/2\rfloor$ setting $k=\alpha n$, we have
%\begin{eqnarray*}
%\binom{n}{\alpha n}^{-1}&=& \Theta \left(n^{1/2} \left(\alpha^{\alpha} (1-\alpha)^{1-\alpha}\right)^n \right)
%\end{eqnarray*}
%and setting $C=\max_{1/4 \leq \alpha\leq 1/2} \alpha^{\alpha} (1-\alpha)^{1-\alpha}$,
%where $0<C<1$, for $\lfloor n/4\rfloor+1\le k \le \lfloor n/2\rfloor$, we
%have
% 
%$$\binom{n}{k}^{-(r-1)}= \Theta\left(n^{(r-1)/2}C^{(r-1)n} \right)$$
%
%Consequently
%$$\sum_{k=\lfloor n/4\rfloor+1}^{\lfloor n/2\rfloor} \frac{I_{n-k}^r}{(n-k)!}\, \frac{I_k^r}{k!} 
%    = \Theta\left(n^{(s+1/2)(r-1)+1} C^{(r-1)n} \right) \quad \textrm{where } 0<C<1$$
%\note{pas compris la raison de ce d\'ecoupage}
%
Finally  we find that
\begin{eqnarray*}
    S = \sum_{k=s}^{n-s}\frac{I_k^r}{k!}\, \frac{I_{n-k}^r}{(n-k)!} &=& 2 \frac{I_{n-s}^r}{(n-s)!}
    \left(\frac{I_s^r}{s!} + \O\left(n^{-(r-2)}\right) +
    \O\left(n^{s(r-1)} D^{(r-1)n} \right) \right) \\
    &=& 2 \frac{I_{n-s}^r }{ (n-s)!} \O\left(1\right).
\end{eqnarray*}    
Hence $\sum_{k=s}^{n-s} j_k
j_{n-k} = \O (j_{n-s})$.  Thus we can apply Theorem \ref{thbender} for
any fixed positive integer $s$, it yields
\begin{eqnarray*}
  c_n &=& \sum_{k=0}^{s-1}d_k j_{n-k} + \O\left(j_{n-s}\right).\\  
\end{eqnarray*}
Since $d_0=1$ and $d_1 = -j_1=-I_1^r=-2^r$, we get that    
\begin{eqnarray*}
  c_n &=& j_n - 2^r j_{n-1} +\O\left(j_{n-2}\right).\\  
\end{eqnarray*}
Now Equation (\ref{eqj}) above yields
$$j_{n-2}=\O\left(\frac{j_{n-1}}{n^{r-1}}\right) =
\O\left(\frac{j_n}{n^{2(r-1)}}\right),$$
and the independent technical Proposition \ref{prop cor du col} below
yields
$$\frac{I_n}{n!}=\frac{e^{-1/2}}{2\sqrt{\pi}}\,n^{-1/4} e^{2\sqrt{n}}
(1+o(1)).$$
Therefore 
$$\frac{I_{n-1}}{I_n}=\frac{1}{n} \left(1-\frac{1}{n}\right)^{-1/4}
e^{2\sqrt{n} \left(\sqrt{1-\frac{1}{n}}-1\right)} (1+o(1)) =
\frac{1}{n} (1+o(1)) $$
and 
$$\frac{I_{n-1}^r}{I_n^r}= \frac{1}{n^r} (1+o(1)).$$
Finally
$$j_{n-1}=\frac{I_{n-1}^r}{(n-1)!}= \frac{I_{n}^r}{n!}\,
\frac{1}{n^{r-1}}\,(1+o(1))= \frac{j_{n}}{n^{r-1}}\,(1+o(1)).$$
We conclude that    
$c_n=j_n \left(1 - \frac{2^r}{n^{r-1}} +
o\left(\frac{1}{n^{r-1}}\right)\right)$ or
\begin{equation}\label{eqc}
    C_n \enspace=\enspace I_n^r \left(1 - \frac{2^r}{n^{r-1}} +
    o\left(\frac{1}{n^{r-1}}\right)\right).
\end{equation}
Recall that we denote by $p_n$ the probability for an $n$-vertex graph
whose transitions are defined by an $r$-tuple of partial injections on
$\inter[1,n]$ to be connected.  Equation~\ref{eqc} shows that
$$p_n \enspace=\enspace \frac{C_n}{I_n^r} \enspace=\enspace 1 -
\frac{2^r}{n^{r-1}} + o\left(\frac{1}{n^{r-1}}\right),$$
which concludes the proof of Theorem~\ref{asympt connected}.
\eopo

\subsection{1-trimness}\label{sec:trim}

Recall that for a labeled $A$-graph $\Gamma$ to be admissible,
$\Gamma$ must be 1-trim, that is, no vertex $v\ne 1$ may be a leaf
(see Section~\ref{sec:rejection}, and also Section~\ref{sec:
representation}).  Moreover, vertex $v$ is a leaf if and only if it
has less than 2 images or preimages in the $|A|$ partial injections
that define $\Gamma$.

In this section, we show the following result.

\begin{theorem}\label{thm: trimness}
    Let $A$ be an alphabet of cardinality at least 2.  The probability
    for an $n$-vertex labeled $A$-graph to have no leaf is $1+o(1)$.
\end{theorem}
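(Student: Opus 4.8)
The plan is to bound from above the probability that an $n$-vertex labeled $A$-graph has at least one leaf, and to show this bound tends to $0$. Recall that a vertex $v$ is a leaf precisely when it occurs at most once among all the triples of $E(\Gamma)$; equivalently, counting over the $r = |A|$ partial injections $\sigma_a$ defining $\Gamma$, the total number of occurrences of $v$ as an image $\sigma_a(x)$ or as a point in the domain of $\sigma_a$ is at most $1$. First I would fix a vertex $v$ and estimate the probability that $v$ is a leaf. This is a purely combinatorial count: for a single uniformly random partial injection on $\inter[1,n]$, one needs the probability that $v$ lies in neither the domain nor the image, and the probability that it lies in exactly one of the two; these are ratios of the form (number of partial injections on $\inter[1,n-1]$ avoiding $v$ in the appropriate way) over $I_n$, which using Lemma~\ref{minomajo} — in particular the two-sided estimate $(n+1)I_{n-1} \le I_n \le n\,e^{1/\sqrt n} I_{n-1}$ — can be shown to be $O(1/\sqrt n)$ for the ``misses $v$ entirely'' event. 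Since the $r$ partial injections are chosen independently, the probability that $v$ accumulates total degree $\le 1$ across all of them is $O(n^{-r/2})$, hence $O(n^{-1})$ since $r \ge 2$.

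Then I would apply the union bound over the $n$ choices of $v$: the probability that \emph{some} vertex is a leaf is at most $n \cdot O(n^{-r/2}) = O(n^{1-r/2})$. For $r \ge 3$ this is already $o(1)$. The borderline case $r = 2$ gives only $O(1)$ from this crude argument, so there the estimate must be sharpened: one replaces the blunt bound ``$v$ contributes $\le 1$ occurrence in each $\sigma_a$'' by the more careful observation that being a leaf forces $v$ to be missed entirely by at least $r-1$ of the partial injections and to have degree $\le 1$ in the remaining one, and one tracks the constants in the Lemma~\ref{minomajo} estimates (and the asymptotic $I_n/n! = \frac{e^{-1/2}}{2\sqrt\pi} n^{-1/4} e^{2\sqrt n}(1+o(1))$ from Proposition~\ref{prop cor du col}, which is available in the excerpt) to extract the precise power of $n$. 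This yields probability $O(n^{-1/2})$ per vertex even when $r = 2$, and the union bound then gives $O(n^{1/2}) \cdot$ — wait, that is not $o(1)$; so in fact for $r=2$ one must be still more careful and compute, rather than crudely bound, the expected number of leaves, showing it is $O(n^{-1/2})$ or at least $o(1)$ by exploiting that a leaf of total degree exactly $1$ must have its one incident edge go to a genuinely distinct vertex, cutting the count by a further factor. I would set this up via the first-moment method: let $X$ be the number of leaves among vertices $\ne 1$, compute $\E[X]$ exactly as a sum of the per-vertex probabilities, and show $\E[X] = o(1)$, whence $\Pr[X \ge 1] \le \E[X] = o(1)$.

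The main obstacle is precisely the case $r = 2$: the union bound over $n$ vertices exactly cancels the per-vertex decay unless one squeezes an extra factor of $n^{-1/2}$ (or better) out of the single-vertex probability. I expect the cleanest route is to compute the single-vertex leaf probability asymptotically rather than merely bounding it — decomposing according to which (possibly empty) partial injection has $v$ in its domain/image and applying the sharp asymptotics for $I_n$ — and to verify that the dominant contribution is $\Theta(n^{-r/2})$ with a constant that, summed over $n$ vertices, still vanishes; if it does not vanish for $r=2$ under this accounting, the resolution is that a ``leaf of degree $1$'' configuration is overcounted and the true exponent is strictly smaller, which the careful bookkeeping will reveal. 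Everything else — the independence across letters, the union/first-moment bound, and the conversion from ``no leaf'' to ``$1$-trim'' (since vertex $1$ is allowed to be a leaf, dropping it from the count only helps) — is routine once the single-vertex estimate is in hand.
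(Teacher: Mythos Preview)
Your strategy---compute the probability that a fixed vertex $v$ is a leaf and then take a union bound over the $n$ vertices---is sound and, carried out correctly, is actually more elementary than the paper's argument. But there is a concrete arithmetic slip that derails the $r=2$ case and causes all the subsequent floundering.

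The slip is this: the probability that a single partial injection misses $v$ entirely (i.e.\ $v$ lies in neither the domain nor the image) is exactly $I_{n-1}/I_n$, and Lemma~\ref{minomajo} gives $(n+1)I_{n-1}\le I_n$, so this probability is at most $1/(n+1)=O(1/n)$, \emph{not} merely $O(1/\sqrt n)$. With that correction the bookkeeping runs cleanly:
\begin{itemize}
    \item $\mathbb{P}(v\text{ has degree }0\text{ in a single injection}) = I_{n-1}/I_n = O(1/n)$;
    \item $\mathbb{P}(v\text{ has degree exactly }1\text{ in a single injection})$: a direct count gives $2(n-1)\big(I_{n-1}-(n-1)I_{n-2}\big)/I_n$, and the upper half of Lemma~\ref{minomajo}, namely $I_{n-1}\le (n-1)e^{1/\sqrt{n-1}}I_{n-2}$, yields $I_{n-1}-(n-1)I_{n-2}=O(\sqrt n\,I_{n-2})$, whence this probability is $O(1/\sqrt n)$;
    \item $\mathbb{P}(v\text{ is a leaf}) \le (I_{n-1}/I_n)^r + r\cdot O(1/\sqrt n)\cdot (I_{n-1}/I_n)^{r-1} = O(n^{-r}) + O(n^{\frac12 - r}) = O(n^{\frac12 - r})$;
    \item union bound over $n$ vertices: $O(n^{\frac32 - r})$, which for $r=2$ is already $O(n^{-1/2})=o(1)$.
\end{itemize}
So no ``still more careful'' second pass is needed for $r=2$; the first-moment method closes it outright, using nothing beyond Lemma~\ref{minomajo}. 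Your claimed exponent $O(n^{-r/2})$ came from feeding the weak $O(1/\sqrt n)$ bound into the $r-1$ ``miss'' factors, and that is precisely what made $r=2$ look borderline.

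For comparison, the paper takes a genuinely different route. It reduces to $r=2$, establishes via saddlepoint analysis (Lemma~\ref{lm count sequences}) that the number $X_n$ of sequences in a random partial injection has mean $\sim\sqrt n$ and standard deviation $o(\sqrt n)$, applies Chebyshev to discard the $o(1)$-probability event that either injection has more than $2\sqrt n$ sequences, and on the complementary event bounds the number of leaf-producing pairs by $8\sqrt n\,I_{n-1}I_n\le (8/\sqrt n)I_n^2$. Your route is shorter and avoids both the variance computation and Chebyshev; the paper's route has the side benefit of pinning down the distribution of $X_n$, which it reuses for the rank estimates in Section~\ref{sec:other}.
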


We immediately record the following corollary, which gives our random 
generation strategy its final justification.

\begin{corollary}\label{admissibility}
    Let $A$ be an alphabet of cardinality at least 2.  The probability
    for a given size $n$ labeled $A$-graph $\Gamma$ to be admissible,
    is $1+o(1)$.
\end{corollary}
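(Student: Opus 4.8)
The plan is to deduce admissibility from the conjunction of connectedness and 1-trimness, both of which are handled by the two preceding theorems. Recall that a size $n$ labeled $A$-graph $\Gamma$ is admissible exactly when $(\Gamma,1)$ is connected and $1$-trim, and that — as noted at the start of Section~\ref{sec:trim} — $\Gamma$ is $1$-trim precisely when $\Gamma$ has no leaf other than (possibly) vertex $1$. So the event ``$\Gamma$ admissible'' contains the event ``$\Gamma$ connected and $\Gamma$ has no leaf at all'', and fails only inside the union of the complementary events ``$\Gamma$ not connected'' and ``$\Gamma$ has a leaf''.

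First I would write, for the uniform distribution on size $n$ labeled $A$-graphs,
$$ \Pr[\Gamma \text{ not admissible}] \;\le\; \Pr[\Gamma \text{ not connected}] \;+\; \Pr[\Gamma \text{ has a leaf}]. $$
By Theorem~\ref{asympt connected} the first term is $\frac{2^r}{n^{r-1}} + o(\frac{1}{n^{r-1}}) = o(1)$ (here $r = |A| \ge 2$), and by Theorem~\ref{thm: trimness} the second term is $o(1)$ as well, since the probability of having \emph{no} leaf is $1 + o(1)$, hence the probability of having a leaf is $o(1)$. Adding the two bounds gives $\Pr[\Gamma \text{ not admissible}] = o(1)$, i.e. $\Pr[\Gamma \text{ admissible}] = 1 + o(1)$, which is the claim. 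Strictly speaking one should note that having a leaf \emph{different from vertex $1$} is what obstructs $1$-trimness, and the event ``$\Gamma$ has a leaf $\ne 1$'' is contained in ``$\Gamma$ has a leaf'', so the union bound above is still valid with Theorem~\ref{thm: trimness} applied verbatim.

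There is essentially no obstacle here: the corollary is a one-line union bound over the two negligible bad events, and all the analytic work — the saddle-point estimates for $I_n$, Bender's theorem for the connectedness asymptotics, and the leaf-count estimate — has already been carried out in Theorems~\ref{asympt connected} and~\ref{thm: trimness}. The only point requiring a moment's care is the bookkeeping between ``leaf'' and ``leaf $\ne 1$'', which is immediate. This is exactly what justifies the rejection strategy of Section~\ref{sec:rejection}: the average number of rejects, which is $1/\Pr[\Gamma \text{ admissible}]$, tends to $1$ as $n \to \infty$.
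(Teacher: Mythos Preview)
Your proof is correct and follows essentially the same approach as the paper: invoke Theorems~\ref{asympt connected} and~\ref{thm: trimness} and combine them to conclude that the bad event has probability $o(1)$. The paper phrases the combination as a product of the two probabilities rather than a union bound on the complements; your union bound is arguably the cleaner formulation, since connectedness and absence of leaves are not independent events.
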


\begin{proof}
  In view of Theorems \ref{asympt connected} and \ref{thm: trimness},
  an $n$-vertex labeled $A$-graph is connected with probability
  $1+\O\left(\frac{1}{n}\right)$, and without leaves (in particular: 
  1-trim) with probability $1 + o(1)$. It follows that the probability for
  an $n$-vertex labeled $A$-graph to be connected and without leaves is
  $$\left(1+\O\left(\frac{1}{n}\right)\right)\ \left(1+o(1)\right) =
  1+o(1),$$
  and the result follows.
\end{proof}

The rest of this section is devoted to the proof of Theorem~\ref{thm:
trimness}.  We first observe that it suffices to establish this result
when $|A| = 2$: indeed, if $A = \{a_1,\ldots,a_r\}$ with $r > 2$ and
if the $\{a_1,a_2\}$-part of an $A$-graph has no leaf, then neither
does the whole $A$-graph.  So we assume that $r = 2$ for the rest of
this section.

%%%%%%%%%%%%%%%%%%%%%%%%
\subsubsection{Number of sequences}\label{sec: nber of seq}

We note that a leaf in an $A$-graph forms a length 1 sequence in
the functional graph of one of the partial injections, and is an
endpoint of a sequence in the functional graph of the other.

We start with a study of the parameter $X_n$, which counts the number
of sequences in the functional graph of a partial injection on
$\inter[1,n]$.  We will compute the expectation and the variance of
$X_n$ in Section~\ref{sec e and sv of Xn}.  Note that the number of
endpoints of sequences in the functional graph of a random partial
injection is bounded above by the quantity accounted for by the
parameter $2X_n$.

We first introduce the bivariate series $J(z,u) = \sum_{n,k}
\frac{J_{n,k}}{n!} z^nu^k$, where $J_{n,k}$ denotes the number of
partial injections on $\inter[1,n]$, whose functional graph has $k$
sequences (that is, such that $X_n = k$). The EGS of non-empty 
sequences was already computed and it is equal to $\frac z{1-z}$ or, 
in this multivariate setting, $\frac{zu}{1-z}$. The EGS of cycles is 
$\log\frac z{1-z}$ (also in this multivariate setting). The 
multivariate analogue of Proposition~\ref{EGS calculus} (see Flajolet 
and Sedgewick \cite[ex. 7, section III.3]{FSBook}) then shows that
$$J(z,u) = \exp\left(\frac{zu}{1-z} + \log\left(\frac1{1-z}\right)\right) =
\frac{1}{1-z}\exp\left(\frac{zu}{1-z}\right).$$
In particular, $J(z,1) = I(z)$. 

The expectation of variables $X_n$ and $X_n^2$ are
$$\E(X_n) = \frac{\sum_k kJ_{n,k}}{\sum_k J_{n,k}} \quad \hbox{and}
\quad
\E(X_n^2) = \frac{\sum_k k^2J_{n,k}}{\sum_k J_{n,k}}.$$
Now $\sum_k J_{n,k}$ is the coefficient of $z^n$ in $J(z,1)$ and
$\sum_k kJ_{n,k}$ is the coefficient of $z^n$ in
$\frac{\partial}{\partial u}J(z,u)\big|_{u=1}$.  The coefficient of
$z^n$ in $\frac{\partial^2}{\partial u^2}J(z,u)\big|_{u=1}$ is $\sum_k
k(k-1)J_{n,k}$.

Let
$$H_p(z) = \frac{1}{(1-z)^{p}} I(z) = \frac{1}{(1-z)^{p+1}}
\exp\left(\frac{z}{1-z}\right).$$
Then we have
\begin{eqnarray*}
    J(z,1) &=& I(z) \enspace=\enspace H_0(z),\\
    \frac{\partial}{\partial u} J(z,u) \Big|_{u=1} &=&
    \frac{z}{(1-z)^2}\exp\left(\frac{z}{1-z}\right) \enspace=\enspace
    \frac{z}{1-z} I(z) \enspace=\enspace z H_1(z),\\
    \frac{\partial^2}{\partial u^2} J(z,u) \Big|_{u=1} &=&
    \frac{z^2}{(1-z)^3}\exp\left(\frac{z}{1-z}\right)
    \enspace=\enspace \frac{z^2}{(1-z)^2} I(z) \enspace=\enspace z^2 H_2(z).\\
\end{eqnarray*}
For convenience, if $S(z)$ is a formal power series, we let
$[z^n]S(z)$ denote the coefficient of $z^n$ in $S(z)$.  Then the
expectation of variables $X_n$ and $X^2_n$ are given by

\begin{eqnarray}\label{eq:meanx}
    \E(X_n) &=& \frac{[z^n]z H_1(z)}{[z^n] I(z)}= \frac{[z^{n-1}]
    H_1(z)}{[z^n] H_0(z)},\\
    \label{eq:meanx2}
    \E(X_n^2) &=& \frac{[z^n] z^2H_2(z)}{[z^n] I(z)} +
    \frac{[z^n]zH_1(z)}{[z^n]I(z)}=\frac{[z^{n-2}] H_2(z)}{[z^n]
    H_0(z)} + \frac{[z^{n-1}]H_1(z)}{[z^n]H_0(z)}.
\end{eqnarray}
The variance of $X_n$ is
\begin{eqnarray}\label{eq:varx}
\sigma^2(X_n) &=& \E(X_n^2) - \E(X_n)^2.
\end{eqnarray} 

Thus finding asymptotic estimates of $\E(X_n)$ and $\sigma^2(X_n)$
requires finding estimates of the coefficients of the functions
$H_p(z)$ for $p = 0,1,2$.

%%%%%%%%%%%%%%%%%%%%%%%%
\subsubsection{Saddlepoint asymptotics} \label{sec:saddlepoint} 

Saddlepoint analysis is a powerful method to find asymptotic estimates
of the coefficients of analytic functions which exhibit
exponential-type growth in the neighborhood of their singularities.
We refer the reader to the books by Flajolet and Sedgewick
\cite{FSBook1} and \cite[Chap.  VIII]{FSBook}, and to the survey by
Odlyzko \cite{odlyzko} for a thorough presentation of saddlepoint
analysis.

The fast growth of the coefficients of $H_p(z)$ justifies the
application of saddlepoint analysis.  The theorem we want to use,
Theorem~\ref{thcol} below, requires an additional hypothesis, namely
the H-admissibility of the functions $H_p$ \cite[Section
VIII.5]{FSBook}.  We now verify that this rather technical condition
is satisfied.

Let $f(z)$ be a function that is analytic at the origin, with radius
of convergence $\rho$, positive on  $]0,\rho[$. Put $f(z)$
into its exponential form $f(z)=e^{h(z)}$ and let
$$a(r)=rh'(r) \quad \mbox{and} \quad b(r)=r^2h^{''}(r)+rh'(r).$$
The function $f(z)$ is said to be {\it H-admissible} if there exists a
function $\delta\colon ]0,\rho[ \longrightarrow ]0,\pi[$ such that the
following three conditions hold:
\begin{itemize}
    \item[(H1)] $\lim_{r \rightarrow \rho} b(r) = +\infty.$
    \item[(H2)] Uniformly for $|\theta| \leq \delta(r)$
    $$f(re^{i\theta}) \sim f(r)e^{i\theta a(r)
    -\frac{1}{2} \theta^2 b(r)}\quad\hbox{when $r$ tends to $\rho$.}$$
    [That is, $f(re^{i\theta}) = f(r)e^{i\theta a(r)
    -\frac{1}{2} \theta^2 b(r)} (1 + \gamma(r,\theta))$ with 
    $|\gamma(r,\theta)| \le \tilde\gamma(r)$ when $|\theta| < 
    \delta(r)$ and $\lim_{r \rightarrow \rho}\tilde\gamma(r) = 0$.]
    \item[(H3)] and uniformly for $\delta(r)\leq |\theta| \leq \pi$ 
    $$f(re^{i\theta})\sqrt{b(r)} = o(f(r)) \qquad \mbox{when $r$ 
    tends to $\rho$.}$$
\end{itemize}

\begin{lemma}
    The functions $H_0(z)$, $H_1(z)$ and $H_2(z)$ are $H$-admissible.
\end{lemma}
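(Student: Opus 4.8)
The plan is to verify the three H-admissibility conditions (H1), (H2), (H3) directly, handling $p=0,1,2$ by one uniform argument. Write $H_p(z)=e^{h_p(z)}$, where
$$h_p(z)=\frac{z}{1-z}-(p+1)\log(1-z)$$
is analytic on $|z|<1$ and positive on $]0,1[$ (its Taylor coefficients are non-negative, being a product of series with non-negative coefficients), so $\rho=1$. Differentiating gives $h_p'(z)=(1-z)^{-2}+(p+1)(1-z)^{-1}$ and $h_p''(z)=2(1-z)^{-3}+(p+1)(1-z)^{-2}$, whence $a(r)=rh_p'(r)\sim(1-r)^{-2}$ and $b(r)=r^2h_p''(r)+rh_p'(r)\sim 2(1-r)^{-3}$ as $r\to 1^-$. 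In particular $b(r)\to+\infty$, which is (H1).

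For (H2) and (H3) fix a cut-off of the form $\delta(r)=(1-r)^{\alpha}$ with $\frac43<\alpha<\frac32$ (for instance $\alpha=\frac75$). Viewed as a function of $\theta$, $h_p(re^{i\theta})$ is analytic in a disc of radius $\asymp 1-r$ about $\theta=0$, so Taylor's formula with Cauchy estimates on that disc yields
$$h_p(re^{i\theta})=h_p(r)+i\theta\,a(r)-\frac12\,\theta^2 b(r)+\O\big(|\theta|^3(1-r)^{-4}\big)$$
uniformly for $|\theta|\le\delta(r)$; since $\alpha>\frac43$ the error term tends to $0$ there, which is exactly (H2). For (H3) split $\delta(r)\le|\theta|\le\pi$ into a near-axis part $\delta(r)\le|\theta|\le\varepsilon(1-r)$ (with $\varepsilon>0$ a small fixed constant) and a complementary part $\varepsilon(1-r)\le|\theta|\le\pi$. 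On the near-axis part the expansion above gives $\mathrm{Re}\,h_p(re^{i\theta})-h_p(r)\le-\frac13\theta^2 b(r)\le-\frac13\delta(r)^2 b(r)$, which goes to $-\infty$ like $(1-r)^{2\alpha-3}$ because $\alpha<\frac32$. On the complementary part we estimate directly: writing $|1-re^{i\theta}|^2=(1-r)^2+4r\sin^2(\theta/2)$, one has
$$\mathrm{Re}\,\frac{re^{i\theta}}{1-re^{i\theta}}=\frac{(1-r)+2r\sin^2(\theta/2)}{(1-r)^2+4r\sin^2(\theta/2)}-1,$$
which is decreasing in $|\theta|$ on $[0,\pi]$, hence at most $\frac{1}{1-r}(1+c\varepsilon^2)^{-1}-1$ for a suitable $c>0$ once $|\theta|\ge\varepsilon(1-r)$; since $-\log|1-re^{i\theta}|\le-\log(1-r)$, the logarithmic summand of $h_p$ only helps, and we obtain $\mathrm{Re}\,h_p(re^{i\theta})-h_p(r)\le-c'(1-r)^{-1}$. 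In both parts the gap $\mathrm{Re}\,h_p(re^{i\theta})-h_p(r)$ tends to $-\infty$ at a polynomial rate in $(1-r)^{-1}$, which dominates $\log\sqrt{b(r)}\sim\frac32\log\frac{1}{1-r}$, so $H_p(re^{i\theta})\sqrt{b(r)}=o(H_p(r))$: this is (H3).

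Throughout, the summand $-(p+1)\log(1-z)$ contributes only a term of size $\log\frac{1}{1-r}$ to $h_p(r)$ and lower-order (correctly signed) terms to $a(r)$, $b(r)$ and to the $\theta$-expansion, so it never disturbs the leading asymptotics; the index $p$ enters only through irrelevant constants, and the same computation covers all three functions. I expect the main obstacle to be arranging (H2) and (H3) to hold for one common cut-off $\delta(r)$ — that is, pinning down the window $\frac43<\alpha<\frac32$ in which the cubic Taylor remainder is already negligible while the Gaussian term already swamps the logarithmic correction carried by $\sqrt{b(r)}$ — together with the somewhat delicate control of $\mathrm{Re}\,\frac{z}{1-z}$ in the transition zone $|\theta|\asymp 1-r$, where neither the Taylor expansion nor the crudest modulus bound is by itself decisive.

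If one prefers to cite rather than compute, an alternative is to start from the classical H-admissibility of $\exp\big(\frac{z}{1-z}\big)$ (Hayman; see \cite{FSBook}) and then argue that the algebraic prefactor $(1-z)^{-(p+1)}$ preserves H-admissibility; but the direct verification sketched above is self-contained.
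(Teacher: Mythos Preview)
Your proposal is correct and follows essentially the same approach as the paper: direct verification of (H1)--(H3) with $\delta(r)=(1-r)^\alpha$ for $\alpha$ in the window $(4/3,3/2)$ (the paper picks $\alpha=17/12$), Taylor expansion of $h_p(re^{i\theta})$ for (H2), and the monotonicity of $\mathrm{Re}\,\frac{z}{1-z}$ in $|\theta|$ for (H3). The only organizational difference is that the paper exploits this monotonicity globally to reduce (H3) to a single explicit computation at $\theta=\delta(r)$, whereas you split $[\delta(r),\pi]$ into a near-axis piece (reusing the (H2) expansion, which works because the cubic remainder is $O(\varepsilon)\cdot\theta^2 b(r)$ there) and a far piece (where your monotonicity bound kicks in); both routes lead to the same estimate.
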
    

\begin{proof}
    First it is elementary to verify that
    $H_p(z)=\frac{1}{(1-z)^{p+1}} \exp\left(\frac{z}{1-z}\right)$ ($p
    = 0,1,2$) is analytic at the origin, with radius of convergence
    $\rho=1$, and is positive on the real segment $]0,\rho[$.
    Following the definition of H-admissibility above, we find that
    $H_p(z) = e^{h_p(z)}$ with $h_p(z) = \frac z{1-z} -
    (p+1)\log(1-z)$, so that
\begin{equation}\label{eq:apbp}
a_p(r)=r \frac{(p+2) - (p+1)r}{(1-r)^2} \quad \mbox{and} \quad
b_p(r)=r\frac{(p+2)-pr}{(1-r)^3}.
\end{equation}
Therefore Condition (H1) is satisfied.

Let $\delta(r)=(1-r)^{17/12}$ (for a discussion on the choice of
$\delta$, we refer the readers to \cite[Chap. VIII]{FSBook}).
For $\theta$ small enough, one can expand $h_p(re^{i\theta})$ into
$$h_p(re^{i\theta}) = h_p(r)+\sum_{m=1}^{\infty}\alpha_m(r)
\frac{(i\theta)^m}{m!},$$
with $\alpha_{m}(r)=r \frac{d}{dr}\alpha_{m-1}(r)$ and $\alpha_0(r) =
h_p(r)$ (by definition of a Taylor development).  In particular, we
find
\begin{eqnarray*}
    \alpha_1(r) &=& a_p(r), \\
    \alpha_2(r) &=& b_p(r),\quad\hbox{and}\\
    \alpha_3(r) &=& \frac{r}{(1-r)^4}\left((2+p) + 4r - pr^2\right).
\end{eqnarray*}
Thus, as $r$ tends towards $1$, for $|\theta|\leq \delta(r)$,
$\alpha_3(r)\theta^3 = \O\left((1-r)^{1/4}\right) = o(1)$.  More
generally, $\alpha_m(r)\theta^m = \O\left((1-r)^{1/{m+1}}\right) =
o(\alpha_{m-1}(r))$.  Therefore, uniformly for $|\theta| \le
\delta(r)$
$$h_p(re^{i\theta})= h_p(r)+i\theta a(r) -\frac{1}{2} \theta^2 b(r) +
o(1)$$
and Condition (H2) follows.

Finally, we have
\begin{eqnarray*}
    |H_p(re^{i\theta})| & = & \frac{1}{|1-re^{i\theta}|^{p+1}} \exp
    \bigg( \Re \bigg( \frac{re^{i\theta}}{1-re^{i\theta}}
    \bigg)\bigg)\\
    &=& \frac{1}{(1+r^2-2r\cos \theta)^{(p+1)/2}} \exp \bigg(
    \frac{r(\cos \theta - r)}{1+r^2-2r\cos \theta}\bigg).
\end{eqnarray*}
We observe that for $r>0$, $r(\cos\theta - r)$ and $(1 + r^2 
-2r\cos\theta)^{-1}$ are decreasing functions of $\theta$ on 
$]0,\pi[$. Thus, for each $r > 0$ and $\delta(r) \le |\theta| < \pi$, 
$|H_p(re^{i\theta})|$ is bounded above by $|H_p(re^{i\delta(r)})|$, 
namely
$$
    |H_p(re^{i(1-r)^\frac{17}{12}})| =
    \frac{\exp \bigg(\frac{r(\cos (1-r)^\frac{17}{12} -
    r)}{1+r^2-2r\cos (1-r)^\frac{17}{12}}\bigg)}{(1+r^2-2r\cos
      (1-r)^\frac{17}{12})^{(p+1)/2}}
$$    

In the neighborhood of 0, $\cos\theta = 1 - \frac12 \theta^2 +
\O(\theta^4)$.  So when $r$ tends towards $1$, we have
\begin{eqnarray*}
    \cos (1-r)^{17/12} &=& 1 -\frac{1}{2} (1-r)^{17/6} +
    \O\left((1-r)^{17/3}\right),\\
    1+r^2-2r\cos(1-r)^{17/12}&=&1+r^2-2r +r (1-r)^{17/6} +
    \O\left((1-r)^{17/3}\right)\\
    &=&(1-r)^2 \left(1+r (1-r)^{5/6} +
    \O\left((1-r)^{11/3}\right)\right),\\
    \frac{1}{1+r^2-2r\cos(1-r)^{17/12}} &=& 
    \frac{(1-r)^{-2}}{1+r (1-r)^{5/6} + \O\left((1-r)^{11/3}\right)}\\
    &=& (1-r)^{-2} \left(1-r (1-r)^{5/6} + 
    \O\left((1-r)^{5/3}\right)\right).
\end{eqnarray*}
Moreover
\begin{eqnarray*}
   \frac{r(\cos (1-r)^\frac{17}{12} -
    r)}{1+r^2-2r\cos (1-r)^\frac{17}{12}} & = & \frac{r}{(1-r)^2} 
   \left(1-r -\frac{1}{2} (1-r)^{17/6} +
   \O\left((1-r)^{17/3}\right)\right)\\
   && \hskip1cm \left(1-r (1-r)^{5/6} + \O\left((1-r)^{5/3}\right)\right)\\
   & = & \frac{r}{1-r} \left(1- \frac12(1-r)^{11/6} 
   +\O\left((1-r)^{14/3}\right)\right)\\
   && \hskip1cm \left(1-r (1-r)^{5/6} + \O\left((1-r)^{5/3}\right)\right)\\
   & = & \frac{r}{1-r} \left(1-(1-r)^{5/6} +\O\left((1-r)^{5/3}\right)\right).
\end{eqnarray*}
It follows that
\begin{eqnarray*}
    |H_p(re^{i(1-r)^{17/12}})| & =& (1-r)^{-(p+1)} \left(1-r
    (1-r)^{5/6} + \O\left((1-r)^{5/3}\right)\right)^{(p+1)/2}\\
    && \hskip.55cm \exp \left(\frac{r}{1-r} \left(1-(1-r)^{5/6}
    +\O\left((1-r)^{5/3}\right)\right) \right).
\end{eqnarray*}
As a result,
\begin{eqnarray*}
    |H_p(re^{i\theta})| &\leq& (1-r)^{-(p+1)} (1+\O(1-r)^{5/6})\\
    && \hskip1cm\exp \Bigg( \frac{r}{1-r} \bigg(1-(1-r)^{5/6}
    +\O\left((1-r)^{5/3}\right) \bigg)\Bigg),\\
    |H_p(re^{i\theta})| &\leq& \left(H_p(r) + \exp\left(\frac
    r{1-r}\right) \O((1-r)^{\frac56 -(p+1)})\right)\\
    && \hskip3cm\exp\left(-\frac
    r{(1-r)^\frac16} + \O((1-r)^\frac23)\right),
\end{eqnarray*}
and hence
$$|H_p(re^{i\theta})|\sqrt{b_p(r)} = o(H_p(r)),$$
that is, Condition (H3) is satisfied.
\end{proof}

We now want to use the following theorem \cite[Theorem VIII.5]{FSBook}.

\begin{theorem}[coefficients of H-admissible functions] \label{thcol}
    Let  $f(z)$ be a H-admissible function and  $\zeta=\zeta(n)$ be the
    unique solution in the interval  $]0,\rho[$ of the saddlepoint equation
    $$\zeta \frac{f'(\zeta)}{f(\zeta)}=n.$$
    Then
    $$[z^n]f(z) = \frac{f(\zeta)}{\zeta^n \, \sqrt{2 \pi
    b(\zeta)}}\left(1+o(1)\right).$$
    where $b(z) = z^2h''(z)+zh'(z)$ and $h(z) = \log f(z)$.
\end{theorem}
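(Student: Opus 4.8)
The plan is the classical saddlepoint evaluation of the Cauchy coefficient integral, taking the integration radius to be the saddlepoint $\zeta = \zeta(n)$ (the radius minimizing $f(r)/r^n$ on $]0,\rho[$); this is, in essence, the proof in \cite[Chap.~VIII]{FSBook}. Write $h = \log f$, $a(r) = rh'(r)$ and $b(r) = r^2h''(r) + rh'(r)$, so that $b(r) = ra'(r)$. By (H1) the function $a$ is eventually increasing on $]0,\rho[$; using the admissibility hypotheses one checks further that $a$ increases to $+\infty$ as $r \to \rho$, so that for every large $n$ the saddlepoint equation $a(\zeta) = \zeta f'(\zeta)/f(\zeta) = n$ has a unique solution $\zeta(n) \in\ ]0,\rho[$, with $\zeta(n) \to \rho$.

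Fix $r = \zeta(n)$. Starting from
$$[z^n]f(z) = \frac{1}{2\pi r^n}\int_{-\pi}^{\pi} f(re^{i\theta})\,e^{-in\theta}\,d\theta,$$
I would split the integral at $\pm\delta(r)$, where $\delta$ is the function provided by H-admissibility. On the \emph{central} range $|\theta| \le \delta(r)$, condition (H2) reads
$$f(re^{i\theta}) = f(r)\,e^{i\theta a(r) - \frac{1}{2}\theta^2 b(r)}\big(1 + \gamma(r,\theta)\big), \qquad \sup_{|\theta|\le\delta(r)} |\gamma(r,\theta)| = o(1).$$
The decisive point is that the choice $a(r) = n$ cancels the linear phase $e^{i\theta a(r)}$ against $e^{-in\theta}$, so the central range contributes
$$\frac{f(r)}{2\pi r^n}\big(1 + o(1)\big)\int_{-\delta(r)}^{\delta(r)} e^{-\frac{1}{2}\theta^2 b(r)}\,d\theta.$$
Substituting $u = \theta\sqrt{b(r)}$ and invoking the compatibility condition $b(r)\delta(r)^2 \to \infty$ (which one may include among the admissibility requirements, so that the Gaussian mass beyond $\pm\delta(r)\sqrt{b(r)}$ is negligible), this becomes
$$\frac{f(r)}{2\pi r^n}\big(1 + o(1)\big)\sqrt{\frac{2\pi}{b(r)}} = \frac{f(\zeta)}{\zeta^n\sqrt{2\pi b(\zeta)}}\big(1 + o(1)\big),$$
which is the announced main term.

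It then remains to discard the \emph{tail} range $\delta(r) \le |\theta| \le \pi$. There $|f(re^{i\theta})e^{-in\theta}| = |f(re^{i\theta})|$, and (H3) asserts precisely that $|f(re^{i\theta})|\sqrt{b(r)} = o(f(r))$ uniformly on this range; hence the tail contributes at most $\frac{1}{r^n}\,o\big(f(r)/\sqrt{b(r)}\big)$, which is $o$ of the main term. Adding the central and tail estimates yields the claim.

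The main obstacle is the central estimate: one must use the uniformity of the error $\gamma(r,\theta)$ in (H2) over the \emph{entire} window $|\theta| \le \delta(r)$ in order to factor it out of the integral as a single $1 + o(1)$, and one must check the compatibility between $\delta$ and $b$ --- essentially $b\delta^2 \to \infty$ while the quadratic expansion (H2) remains valid up to $|\theta| = \delta(r)$ --- that permits replacing the truncated Gaussian integral by the full one. These are exactly the technical facts packaged into the notion of H-admissibility, and were verified for $H_0, H_1, H_2$ in the preceding lemma; granting them, the computation above is routine.
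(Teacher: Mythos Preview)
The paper does not prove this theorem: it is quoted verbatim as \cite[Theorem VIII.5]{FSBook} and used as a black box, so there is no ``paper's own proof'' to compare against. Your sketch is the standard saddlepoint argument (originally due to Hayman \cite{Hayman}) and is essentially the proof given in \cite[Chap.~VIII]{FSBook}; it is correct in outline.

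One small point worth tightening: you write that the compatibility condition $b(r)\delta(r)^2 \to \infty$ is something ``one may include among the admissibility requirements''. In fact it is not an extra hypothesis but a consequence of (H1)--(H3): if $b(r)\delta(r)^2$ stayed bounded along some sequence $r \to \rho$, then (H2) at $\theta = \delta(r)$ would give $|f(re^{i\delta(r)})| \sim f(r)e^{-\frac{1}{2}b(r)\delta(r)^2}$, hence $|f(re^{i\delta(r)})|$ bounded below by a positive multiple of $f(r)$; combined with (H3) this forces $\sqrt{b(r)} = o(1)$, contradicting (H1). With that remark your argument is complete.
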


Let us record immediately an application of this result.

\begin{proposition}\label{prop cor du col}
    With the notation above, for $p = 0,1,2$, 
    $$[z^n]H_p(z) = \frac{e^{-1/2}}{2\sqrt{\pi}}\,n^{p/2-1/4}
    e^{2\sqrt{n}} (1+o(1)).$$
\end{proposition}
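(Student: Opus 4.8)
The plan is to apply Theorem~\ref{thcol} to each of the three H-admissible functions $H_p(z) = (1-z)^{-(p+1)}\exp\!\left(\frac{z}{1-z}\right)$ for $p=0,1,2$, for which H-admissibility has just been established in the preceding lemma. The first step is to solve, asymptotically, the saddlepoint equation $\zeta h_p'(\zeta) = n$, where $h_p(z) = \frac{z}{1-z} - (p+1)\log(1-z)$, i.e. $a_p(\zeta) = n$ with $a_p(r) = r\frac{(p+2)-(p+1)r}{(1-r)^2}$ as computed in~(\ref{eq:apbp}). Since $a_p(r)\to+\infty$ as $r\to 1^-$, the root $\zeta = \zeta(n)$ tends to $1$, so I set $\zeta = 1 - \epsilon$ with $\epsilon = \epsilon(n)\to 0$. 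Plugging in, $a_p(1-\epsilon) = \frac{1}{\epsilon^2} + O(\epsilon^{-1})$, so $\epsilon \sim n^{-1/2}$; pushing the expansion one more term gives $\epsilon = n^{-1/2} + c_p n^{-1} + o(n^{-1})$ for an explicit constant $c_p$ (the $p$-dependent correction comes from the $(p+2)-(p+1)r$ numerator). I would carry this expansion just far enough to control the three quantities appearing in Theorem~\ref{thcol}: $\zeta^{-n}$, $f(\zeta) = H_p(\zeta)$, and $b_p(\zeta)$.

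Next I would evaluate each factor. For $b_p(\zeta) = \zeta\frac{(p+2)-p\zeta}{(1-\zeta)^3}$, with $1-\zeta = \epsilon \sim n^{-1/2}$ we get $b_p(\zeta) \sim 2\epsilon^{-3} \sim 2 n^{3/2}$, so $\sqrt{2\pi b_p(\zeta)} \sim 2\sqrt{\pi}\, n^{3/4}$ — note this leading term is independent of $p$. For $H_p(\zeta) = \epsilon^{-(p+1)}\exp\!\left(\frac{1-\epsilon}{\epsilon}\right) = \epsilon^{-(p+1)}\exp\!\left(\frac{1}{\epsilon} - 1\right)$; the polynomial prefactor contributes $\epsilon^{-(p+1)} \sim n^{(p+1)/2}$, and the exponential contributes $e^{-1}\exp(1/\epsilon)$. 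For $\zeta^{-n} = (1-\epsilon)^{-n} = \exp\!\left(-n\log(1-\epsilon)\right) = \exp\!\left(n\epsilon + \tfrac12 n\epsilon^2 + \tfrac13 n\epsilon^3 + \cdots\right)$. The key cancellation is between $\exp(1/\epsilon)$ from $H_p(\zeta)$ and $\exp(-n\log(1-\epsilon))$: writing $1/\epsilon - (-n\log(1-\epsilon)) = 1/\epsilon - n\epsilon - \tfrac12 n\epsilon^2 - \cdots$ and substituting $\epsilon = n^{-1/2} + c_p n^{-1} + \cdots$, the $n^{1/2}$-order terms must combine to $2\sqrt{n}$ while all lower-order terms (in particular those of order $n^0$) must vanish, leaving exactly $e^{2\sqrt{n}}$. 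This is the step I expect to be the main obstacle: one must expand $1/\epsilon$, $n\epsilon$, $n\epsilon^2$, $n\epsilon^3$ each to order $o(1)$ and check that the constant terms cancel — this requires the $c_p n^{-1}$ correction in $\epsilon$ to be correct and forces keeping the $n\epsilon^3$ term, since $n\cdot(n^{-1/2})^3 = n^{-1/2}\to 0$ is fine but intermediate cross-terms are $O(1)$.

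Assembling the three pieces, Theorem~\ref{thcol} gives
$$[z^n]H_p(z) = \frac{H_p(\zeta)}{\zeta^n\sqrt{2\pi b_p(\zeta)}}(1+o(1)) = \frac{\epsilon^{-(p+1)} e^{-1} \exp(1/\epsilon)}{\exp(-n\log(1-\epsilon))\cdot 2\sqrt{\pi}\,n^{3/4}}(1+o(1)),$$
and substituting $\epsilon^{-(p+1)} = n^{(p+1)/2}(1+o(1))$ together with the exponential identity $\exp(1/\epsilon + n\log(1-\epsilon)) = e^{1/2}e^{2\sqrt n}(1+o(1))$ (here the leftover constant is $e^{1/2}$, which combines with the $e^{-1}$ to give $e^{-1/2}$ — I would double-check the bookkeeping of this constant, as it is exactly the $e^{-1/2}$ in the claimed formula) yields
$$[z^n]H_p(z) = \frac{e^{-1/2}}{2\sqrt{\pi}}\, n^{(p+1)/2 - 3/4}\, e^{2\sqrt n}(1+o(1)) = \frac{e^{-1/2}}{2\sqrt{\pi}}\, n^{p/2 - 1/4}\, e^{2\sqrt n}(1+o(1)),$$
as desired. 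As a sanity check, the case $p=0$ recovers $[z^n]I(z) = \frac{I_n}{n!} = \frac{e^{-1/2}}{2\sqrt\pi}n^{-1/4}e^{2\sqrt n}(1+o(1))$, which matches the asymptotics for EIS~\textbf{A002720} and is consistent with $I_n \le n!\,e^{2\sqrt n - 1}$ from Lemma~\ref{minomajo}. The uniformity of the argument across $p=0,1,2$ means I would do the computation once with $p$ as a parameter rather than three times.
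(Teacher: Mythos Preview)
Your approach is essentially the paper's: apply Theorem~\ref{thcol} to the H-admissible $H_p$, locate the saddle point, and assemble the three factors. The only real difference is that the paper solves the saddlepoint equation \emph{exactly} (since $a_p(\zeta)=n$ is quadratic in $\zeta$, giving $\zeta_p = 1 - n^{-1/2} - \tfrac{p}{2n} + O(n^{-3/2})$), whereas you expand $\epsilon = 1-\zeta$ asymptotically; both routes yield the same estimates. One bookkeeping slip to fix: in your displayed assembly the denominator should contain $\zeta^n = \exp\!\big(n\log(1-\epsilon)\big)$, not $\exp\!\big(-n\log(1-\epsilon)\big)$, so the relevant exponential is $\exp\!\big(1/\epsilon - n\log(1-\epsilon)\big) = \exp\!\big(1/\epsilon + n\epsilon + \tfrac12 n\epsilon^2 + \cdots\big) = e^{2\sqrt{n}+1/2}(1+o(1))$ --- your stated identity has the sign reversed, though your final answer is correct.
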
 

\begin{proof}
    For any positive integer $n$ the saddle point $\zeta_p(n)$ is the
    least positive solution of $z \frac{H'_p(z)}{H_p(z)} = n$, that
    is, the least positive solution of
    $$(n+p+1)z^2-(2n+p+2)z+n = 0$$
    and it follows that
    $$\zeta_p(n) = 1 - \frac{p + \sqrt{4n+(p+2)^2}}{2(n+p+1)} = 1
    - \frac1{\sqrt{n}} -\frac{p}{2n} + \O\left(\frac1{n^{3/2}}\right).$$
    Moreover from Equation (\ref{eq:apbp}), the function $b_p(r)$ is $
    b_p(r)=r\frac{(p+2)-pr}{(1-r)^3}$.  
    Therefore
    $$[z^n]H_p(z) = \frac{e^{\frac{\zeta_p}{1- \zeta_p}}}{\sqrt{2\pi
    \left((p+2)-p\zeta_p\right)}} \, \frac{1}{\zeta_p^{n}
    (1-\zeta_p)^{p} } \, \sqrt{\frac{1-\zeta_p}{\zeta_p}}
    \left(1+o(1)\right).$$
    Now $\sqrt{\frac{1-\zeta_p}{\zeta_p}} = n^{-\frac14} (1 + o(1))$,
    $(1- \zeta_p)^p = n^{-\frac p2} (1 + o(1))$ and $\sqrt{2\pi
    \left((p+2)-p\zeta_p\right)} = 2\sqrt\pi (1 + o(1))$.  Finally
    $\frac {\zeta_p}{1-\zeta_p} = (\sqrt n - 1 - \frac p2) (1 + o(1))$
    and $\zeta_p^{-n} = \exp(-n\log\zeta_p) = \exp(\sqrt n +
    \frac{p+1}2)(1 + o(1))$.  So
    \begin{eqnarray*}
	[z^n]H_p(z) &=& \frac{ e^{\sqrt{n}-1-p/2}}{2\sqrt{\pi}}\,
	e^{\sqrt{n} +(p+1)/2} n^{p/2-1/4} (1+o(1))\\
	&=& \frac{e^{-1/2}}{2\sqrt{\pi}}\,n^{p/2-1/4} e^{2\sqrt{n}}
	(1+o(1)).
    \end{eqnarray*}
    
\end{proof}    

%%%%%%%%%%%%%%%
\subsubsection{Expected value and standard deviation of $X_n$}\label{sec e and sv of Xn}

We now conclude the study of the expected value and the standard
deviation of the number of sequences in a random partial injection.

\begin{lemma}\label{lm count sequences}
  The expected number $\E(X_n)$ of sequences in a random partial
  injection of size $n$ is asymptotically equal to $\sqrt{n}$ with 
  standard deviation $o(\sqrt{n})$.
\end{lemma}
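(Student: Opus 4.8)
The plan is to extract asymptotic estimates for $\E(X_n)$ and $\sigma^2(X_n)$ directly from Equations~(\ref{eq:meanx}), (\ref{eq:meanx2}) and~(\ref{eq:varx}), using the coefficient estimates furnished by Proposition~\ref{prop cor du col}. For the expectation, I would write
$$\E(X_n) = \frac{[z^{n-1}]H_1(z)}{[z^n]H_0(z)}$$
and plug in $[z^{n-1}]H_1(z) = \frac{e^{-1/2}}{2\sqrt\pi}(n-1)^{1/4}e^{2\sqrt{n-1}}(1+o(1))$ and $[z^n]H_0(z) = \frac{e^{-1/2}}{2\sqrt\pi}n^{-1/4}e^{2\sqrt n}(1+o(1))$. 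The prefactor ratio is $(n-1)^{1/4}/n^{-1/4} = n^{1/2}(1+o(1))$, and the exponential ratio is $e^{2\sqrt{n-1}-2\sqrt n} = e^{-1/\sqrt n + O(1/n)} = 1+o(1)$; combining gives $\E(X_n) = \sqrt n\,(1+o(1))$, which is the first claim.

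For the standard deviation, the same substitution applied to Equation~(\ref{eq:meanx2}) gives
$$\E(X_n^2) = \frac{[z^{n-2}]H_2(z)}{[z^n]H_0(z)} + \frac{[z^{n-1}]H_1(z)}{[z^n]H_0(z)} = \frac{n^{3/4}}{n^{-1/4}}(1+o(1)) + \sqrt n\,(1+o(1)) = n\,(1+o(1)),$$
so that $\E(X_n^2) = n + o(n)$ while $\E(X_n)^2 = n + o(n)$ as well. Hence $\sigma^2(X_n) = \E(X_n^2) - \E(X_n)^2 = o(n)$, and therefore $\sigma(X_n) = o(\sqrt n)$, as claimed.

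The main obstacle here is not the algebra but the \emph{precision bookkeeping}: the leading terms of $\E(X_n^2)$ and $\E(X_n)^2$ are both exactly $n$, so the naive $(1+o(1))$ estimates only show the difference is $o(n)$ — they do not give a sharper rate for $\sigma^2(X_n)$, and one must be careful not to claim more than $o(\sqrt n)$ for the deviation. A secondary point requiring care is that Proposition~\ref{prop cor du col} is stated for $[z^n]H_p(z)$, so I must justify shifting the index (replacing $n$ by $n-1$ or $n-2$ inside the asymptotic formula), which is immediate since $e^{2\sqrt{n-j}} = e^{2\sqrt n}(1+o(1))$ and $(n-j)^{\alpha} = n^{\alpha}(1+o(1))$ for fixed $j$ and any fixed exponent $\alpha$. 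With these two observations in hand the proof is a short computation, and the bound $o(\sqrt n)$ on the standard deviation is exactly what is needed downstream to control the number of leaves.
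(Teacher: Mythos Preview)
Your proposal is correct and follows essentially the same route as the paper: both plug the coefficient asymptotics of Proposition~\ref{prop cor du col} into Equations~(\ref{eq:meanx}), (\ref{eq:meanx2}) and~(\ref{eq:varx}) to obtain $\E(X_n)\sim\sqrt n$, $\E(X_n^2)\sim n$, and hence $\sigma^2(X_n)=o(n)$. Your explicit remarks about the index shift and the precision bookkeeping in the cancellation $\E(X_n^2)-\E(X_n)^2$ are welcome clarifications that the paper leaves implicit.
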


\begin{proof}
  Recall that the expected values of the random variable $X_n$ and
  $X_n^2$ and the variance of $X_n$ are given in Equations
  (\ref{eq:meanx}), (\ref{eq:meanx2}) and (\ref{eq:varx}) in
  Section~\ref{sec: nber of seq}.  We can use Proposition~\ref{prop
  cor du col} to estimate these quantities.
  
  From Equation (\ref{eq:meanx})  we find that
  $$\E(X_n) = \frac{(n-1)^{1/4}
  e^{2\sqrt{n-1}}}{n^{-1/4}e^{2\sqrt{n}}}(1+o(1)) = \sqrt{n}
  (1+o(1)),$$
  that is, the expected number $\E(X_n)$ of sequences in a random
  partial injection of size $n$ is asymptotically equal to $\sqrt{n}$.
  
  Similarly, from Equation (\ref{eq:meanx2}) we get
  $$\E(X_n^2) = \frac{(n-2)^{3/4}
  e^{2\sqrt{n-2}}}{n^{-1/4}e^{2\sqrt{n}}}(1+o(1)) + \sqrt n (1 + o(1))
  = n (1+o(1)),$$
  and from Equation (\ref{eq:varx}) $ \sigma^2(X_n)=o(n)$.
  Thus the standard deviation $\sigma(X_n)$ of the number of sequences
  in a random partial injection of size $n$ is $o(\sqrt n)$.
\end{proof}

%%%%%%%%%%%%%%
\subsubsection{Proof of Theorem~\ref{thm: trimness}}

Before we prove Theorem~\ref{thm: trimness}, let us recall the
statement of Chebyshev's inequality.

\begin{proposition}[Chebyshev's inequality]    
    If $X$ is a random variable of expectation $\E(X)$ with a finite
    variance $\sigma^2(X)$ then for any positive real $\alpha$
    \begin{equation}\label{eq:tchebychev}
    \mathbb{P}\{|X-\E(X)|\geq \alpha \}< \frac{\sigma^2(X)}{\alpha^2}.
    \end{equation}
\end{proposition}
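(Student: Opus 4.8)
The plan is to deduce Chebyshev's inequality from the elementary Markov inequality, which I would establish first. For a nonnegative random variable $Y$ and a threshold $t>0$, I would split the expectation according to whether $Y$ reaches $t$: writing $\E(Y) = \E(Y\,\mathbf 1_{\{Y \ge t\}}) + \E(Y\,\mathbf 1_{\{Y < t\}})$ and dropping the nonnegative second term gives $\E(Y) \ge \E(Y\,\mathbf 1_{\{Y \ge t\}}) \ge t\,\mathbb{P}\{Y \ge t\}$, hence $\mathbb{P}\{Y \ge t\} \le \E(Y)/t$. In the setting of this paper the relevant variables (such as $X_n$) take finitely many values, so every expectation is a finite sum and no integrability subtlety arises; the finiteness of $\sigma^2(X)$ assumed in the hypothesis is exactly what guarantees that the bound below is a finite quantity.

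Next I would apply Markov's inequality to the nonnegative variable $Y = (X - \E(X))^2$ with threshold $t = \alpha^2$. The key observation is that the two events coincide, namely $\{|X - \E(X)| \ge \alpha\} = \{(X - \E(X))^2 \ge \alpha^2\}$, since $\alpha > 0$. Markov then yields
$$\mathbb{P}\{|X - \E(X)| \ge \alpha\} = \mathbb{P}\{(X-\E(X))^2 \ge \alpha^2\} \le \frac{\E\big((X-\E(X))^2\big)}{\alpha^2} = \frac{\sigma^2(X)}{\alpha^2},$$
where the last equality is just the definition of the variance $\sigma^2(X) = \E\big((X-\E(X))^2\big)$.

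The only genuinely delicate point — and hence what I would flag as the main (indeed the sole) obstacle — is the strict inequality sign in the statement, whereas the Markov argument produces only $\le$. To recover $<$, I would retain the term discarded above: with $Y = (X-\E(X))^2$,
$$\E(Y) = \E\big(Y\,\mathbf 1_{\{Y \ge \alpha^2\}}\big) + \E\big(Y\,\mathbf 1_{\{Y < \alpha^2\}}\big) \ge \alpha^2\,\mathbb{P}\{Y \ge \alpha^2\} + \E\big(Y\,\mathbf 1_{\{Y < \alpha^2\}}\big),$$
and observe that the leftover term $\E\big(Y\,\mathbf 1_{\{Y < \alpha^2\}}\big)$ is strictly positive unless $X - \E(X)$ is supported almost surely on the three values $\{0,\pm\alpha\}$. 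Thus the strict form holds outside this degenerate situation; for a fully general $X$ one can at best assert $\le$ (equality does occur, for instance when $X$ is equidistributed on $\{\E(X)-\alpha,\E(X)+\alpha\}$). Since the variables $X_n$ to which the proposition is applied take more than three distinct values with positive probability, the degenerate case never arises here, and the strict inequality as stated is valid for every distribution occurring in the paper. With this caveat the proof is complete.
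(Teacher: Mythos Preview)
The paper does not prove this proposition at all: Chebyshev's inequality is simply recalled as a classical fact and then applied. There is therefore no ``paper's own proof'' to compare your argument against. Your proof via Markov's inequality is the standard one and is correct, and your observation that the strict inequality in the displayed statement cannot hold in full generality (with the explicit counterexample of $X$ equidistributed on $\{\E(X)-\alpha,\E(X)+\alpha\}$) is a genuine and accurate criticism of the paper's formulation; the inequality should read $\le$.
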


As we already noted, given a pair of partial injections, a leaf in the
resulting $A$-graph is an endpoint of a sequence in one of the partial
injections, and a singleton in the other.
In view of Lemma~\ref{lm count sequences}, Chebyshev's inequality
(Equation~(\ref{eq:tchebychev})) applied to $X_n$ and $\alpha = \sqrt
n$, shows that
\begin{equation}\label{probaseq}
      \mathbb{P}\{|X_n-\sqrt{n} |\geq \sqrt{n} \} = o(1).
\end{equation}
In other words, the probability that a partial injection of size $n$
contains more than $2\sqrt n$ sequences tends towards $0$ when $n$
tends towards $\infty$.  Let us call such a partial injection
\textit{sequence-rich}.  Then $n$-vertex labeled $A$-graphs defined by
a pair of partial injections, one of which at least is sequence-rich,
occur with probability $o(1)$.
  
Let us now focus on the labeled $A$-graphs defined
by partial injections, each of which contains less than $2\sqrt{n}$
sequences.  Again, a vertex is a leaf if it is an endpoint of
sequence for one of the injections and a singleton in the other one.
As the two injections play symmetric r\^oles, we estimate the
probability for a vertex to be an endpoint in the first injection and
a singleton for the second one: the total estimate will be bounded
above by twice that probability.
  
Since the first injection has at most $2\sqrt n$ sequences, it has at
most $4 \sqrt{n}$ endpoints.  To estimate the number of second partial
injections in which at least one of these vertices is a singleton, we
count the number of partial injections that do not involve such a
vertex.  For any given vertex, there are $I_{n-1}$ such partial
injections, and as we have to consider up to $4\sqrt{n}$ potential
endpoints, the number of partial injections in which at least one of
these vertices is a singleton is bounded above by $4\sqrt{n}I_{n-1}$.
  
Therefore there are at most $8\sqrt{n}I_{n-1}I_n$ pairs of
partial injections that are not sequence-rich and that exhibit at 
least a leaf.  As $I_{n-1}\leq \frac{I_n}{n}$, this number is
less than or equal to $\frac{8}{\sqrt{n}}I_{n}^2$, and the
associated probability is less than or equal to
$\frac{8}{\sqrt{n}}$.

Consequently the probability for an $n$-vertex labeled $A$-graph to
have at least a leaf is less than or equal to
$$ o(1) + \frac{8}{\sqrt{n}} = o(1),$$
which concludes the proof.

%%%%%%%%%%%%%%
\subsection{The number of size $n$ subgroups}

Let $S_{n,r}$ be the number of size $n$ subgroups of $F = F(A)$, where
$\rank(F) = r$.  By Corollary~\ref{admissibility}, the number of
admissible labeled $A$-graphs of size $n$ is $I_n^r (1+o(1))$.  By
Lemma~\ref{fact labeled}, each size $n$ subgroup is represented by
$(n-1)!$ distinct admissible labeled $A$-graphs, so
$$S_{n,r} \sim \frac{I_n^r}{(n-1)!} .$$
Proposition~\ref{prop cor du col} gives us
an equivalent of $I_n/n!$, and it follows that
$$S_{n,r} \sim n\ n!^{r-1} \frac{I_n^r}{n!^r} \sim n\ n!^{r-1}
\frac{e^{-r/2}}{2^r\pi^{r/2}}\,n^{-r/4}
    e^{2r\sqrt{n}} .$$
By Stirling's formula, $n!$ is equivalent to $\sqrt{2\pi} 
e^{-n}n^{n+\frac12}$ and it follows that
% 
%$$S_{n,r} \sim e^{-\frac r2} 2^{-\frac{r+1}2} \pi^{-\frac12} e^{2r\sqrt 
%n-(r-1)n} n^{(r-1)n + \frac{r+2}4}.$$
$$S_{n,r} \sim \frac{(2e)^{-r/2}}{\sqrt{2\pi}} e^{-(r-1)n+2r\sqrt{n}}n^{(r-1)n+\frac{r+2}4}.$$

%%%%%%%%%%%%%%%%
\section{Random generation algorithm}\label{sec:randomgeneration}

As discussed in Section~\ref{sec: enum}, and in particular in
Section~\ref{sec:rejection} (see also Section~\ref{sec:algoadmissible}
below), the core of our random generation algorithm for admissible 
$A$-graphs, is a procedure to randomly generate size $n$ partial
injections.

The recursive decomposition of partial injections investigated so far
allows us to use the \textit{recursive method} introduced by Flajolet,
Zimmermann and van Cutsem \cite{FZvC} (following work by Nijenhuis and
Wilf \cite{nw78}) to \textit{efficiently} and \textit{randomly}
generate partial injections of size $n$.

Recall that a partial injection is a set of disjoint 
\textit{components}, that are either cycles or non-empty sequences. 
The recursive method consists, in our case, in:
\begin{itemize}
    \item choosing the size $k$ of a component ($k \in
    \{1,\ldots,n\}$) according to the distribution of the sizes of
    components in a random size $n$ partial injection;
    
    \item choosing whether that size $k$ component is a cycle or a
    sequence -- according to the distribution of these two types among
    size $k$ components;
    
    \item and choosing a size $n-k$ partial injection.
\end{itemize}
We give more details below, on how these steps are performed. The 
result of the procedure is a sequence of symbols of the form 
$\xi_1(k_1)\ldots\xi_r(k_r)$, where $k_1+\cdots+k_r = n$, the $\xi_i$ 
are in $\{\sigma,\kappa\}$, $\sigma(k)$ stands for \textit{sequence 
of size $k$}, and $\kappa(k)$ stands for \textit{cycle of size $k$}. 
Such a sequence represents, in a natural way, an unlabeled size $n$ 
partial injection and the last step of the algorithm consists in 
randomly labeling that partial injection.

Let us now be more precise.

%%%%%%%%%%%%%%%%
\subsection{Partial injections}

The tool to grasp the distribution of the sizes of components in
partial injections is the pointing operator $\Theta$: pointing a
labeled object consists in marking one of its atoms, or equivalently
one of its labels from $\{1,\cdots,n\}$.  Naturally, there are $n$ ways
of pointing an object of size $n$. So if the EGS of  a labeled
combinatorial class  $\C$ is $C(z)=\sum
c_nz^n/n!$, then the pointed class of $\C$, denoted by $\Theta\C$, 
has EGS
$$\Theta C(z) = \sum_{n\geq 0}\frac{nc_n}{n!}z^n =
z\frac d{dz}C(z).$$
If $\C$ is, as in our situation, defined as a set of components of a
class $\D$ with EGS $D(z)$, then $C(z)=\exp(D(z))$ and
\begin{eqnarray*}
    \Theta C(z) = z\frac d{dz}C(z) &=& z\frac
    d{dz}\left(\exp(D(z))\right)\\
    &=& z \left(\frac d{dz}D(z) \right) \exp(D(z)) = \Theta D(z)\
    C(z).
\end{eqnarray*}    
The combinatorial interpretation of this equality is the following:
marking an atom of an element of $\C$ amounts to marking an atom of
one of its components (of size, say, $k$), and the remaining part of
the element of $\Theta\C$ is a non-pointed element of $\C$ of size
$n-k$.

For partial injections, we have $I(z) = \exp(D(z))$, with $D(z) =
\frac{z}{1-z} + \log(\frac1{1-z})$.  Therefore
\begin{equation}\label{eq:pointing}
\Theta I(z) = \Theta D(z) \times I(z) =
\left(\frac{z}{(1-z)^2}+\frac z{1-z}\right) I(z),
\end{equation}
where $\frac{z}{(1-z)^2}$  is the EGS of pointed labeled sequences 
and $\frac{z}{1-z}$  is the EGS of pointed labeled cycles. Now
$$[z^k] \frac{z}{(1-z)^2} = \cases{k & if $k\ge 1$,\cr 0
& if $k = 0$,}
\qquad\qquad
[z^k] \frac{z}{1-z} = \cases{1 & if $k\ge 1$,\cr 0 &
if $k = 0$,}$$
so we have
\begin{equation}\label{eq-generation}
    n\frac{I_n}{n!} = \sum_{k=1}^n(k+1)\frac{I_{n-k}}{(n-k)!}\ .
\end{equation}
Therefore the probability $p_k = \mathbb{P}(\textrm{size $k$})$ for the
pointed component to be of size $k$ is
$$p_k \enspace=\enspace \frac{(k+1)
\frac{I_{n-k}}{(n-k)!}}{\frac{I_n}{(n-1)!}} \enspace=\enspace 
\frac1{I_n}\left((k+1) 
\frac{(n-1)!}{(n-k)!} I_{n-k}\right)$$
and the probability for a size $k$ component to be a sequence (resp. 
a cycle) is
$$\mathbb{P}(\textrm{sequence}) = \frac k{k+1}
\qquad\qquad
\mathbb{P}(\textrm{cycle}) = \frac 1{k+1}.$$

We are now ready to describe the random generation algorithm for
partial injections of size $n$. The discussion of its complexity is 
postponed to Section~\ref{sec: complexity}.

Let $\func{Uniform}{[0,1[}$ be the function that returns a real number
chosen uniformly at random in the interval $[0,1[$.  Recall that if
$X$ is a random variable with values in $\inter[1,n] $ with
probability $\mathbb{P}(X=i)=p_i$ then the value of $X$ can be
generated randomly with respect to this probability distribution as
follows (see \cite{dev86} for example).
    
{\small\begin{tabbing}
\quad \= \qquad \= \qquad \= \qquad \kill
$\textsc{Random}X$ \\
\> \textsf{dice} = $\func{Uniform}{[0,1[}$\\
\> $k = 1$, $S = p_1$ \\
\> \keyw{while} $\textsf{dice} \ge S$ \\
\> \> $k = k+1$ \\
\> \> $S = S + p_k$ \\
\> \keyw{return} $k$ \\
\end{tabbing}}

\vskip -.5cm

Our algorithm to randomly generate a partial injection of size $n$
uses directly this idea.  Because the probabilities discussed above
(for a component of a random partial injection to have size $k$, for a
size $k$ component to be a cycle) are rational numbers, we choose to
express the algorithm entirely in integers, in order to facilitate
exact computation, and thereby to guarantee the absence of bias in the
distribution of the partial injections.  Concretely, we multiply
\textsf{dice} and the $p_k$ by $I_n$.

The algorithm requires a preliminary phase, during which a table
containing the values of $I_k$ ($0 \le k\le n$) is computed
using the recurrence relation in Equation~(\ref{eq:rec}).

We denote by $\func{Uniform}{N}$ a function that  returns an integer
chosen uniformly at random in the interval $[0,N[$.

{\small\begin{tabbing}
\quad \= \qquad \= \qquad \= \qquad \kill
$\func{RandomPartialInjection}{n}$ \qquad \\
\> $Result = []$ \\
\> \keyw{while} $n>0$\\
\> \> \com{Compute the size $k$ of a component}\\
\> \>  \textsf{dice} = $\func{Uniform}{I_n}$\\
\> \>  $k = 1$, $T = 1$, $S = 2{I_{n-1}}$ \quad\com{That is, $S = I_np_1$} \\
\> \> \keyw{while} $\textsf{dice} \ge S$ \\
\> \> \> $k = k+1$ \\
\> \> \> $T = T * (n - k + 1)$ \\
\> \> \> $S = S +  (k+1) T I_{n-k}$ \quad\com{That is, $S = S +  I_np_k$} \\
% \> \> \keyw{return} $k$ \\
\> \> \com{Decide whether the component is a sequence or a cycle}\\
\> \> $\textsf{dice}' = \func{Uniform}{k+1}$\\
\> \> \keyw{if} $\textsf{dice}' < k$\\
\> \> \> \keyw{then} Append $\sigma(k)$ to $Result$\\
\> \> \> \keyw{else} Append $\kappa(k)$ to $Result$\\
\> \> $n = n-k$\\
\> \com{Randomly label the final result}\\
\> Label $Result$ with $\func{RandomPermutation}{n}$\\
\> \keyw{return} $Result$ \\
\end{tabbing}}
 
\vskip -.5cm

The outer \keyw{while} loop of the algorithm produces a sequence of
symbols of the form $(\xi_1(k_1),\ldots,\xi_r(k_r))$, with each
$\xi_i \in \{\sigma,\kappa\}$ and such that $\sum_{i=1}^r k_i = n$,
that describes the size and the nature of the components of the size
$n$ partial injection.

The last step of the algorithm, which randomly generates a permutation
of the $n$ elements on which the partial injection is defined, can be
performed in linear time and space using the algorithm given in
Section \ref{sec:permutations}.

%%%%%%%%%%%%%%%%%%%%%%%%%%%%%%%%%
\subsection{Admissible $A$-graphs}\label{sec:algoadmissible}

Recall (see Section~\ref{sec:rejection}) that our algorithm to
generate admissible $A$-graphs consists in randomly generating, for
each letter of the alphabet $A$, a partial injection of size $n$, and
then using a rejection algorithm to keep only admissible $A$-graphs.

Assuming that the table for the values of $I_n$ is already
computed, the algorithm reads as follows.
{\small\begin{tabbing}
    \quad \= \qquad \= \qquad \= \qquad \kill
$\func{RandomAdmissibleAGraph}{n}$ \qquad \\
\> \keyw{repeat} \\
\> \> \keyw{for each} $a\in A$\\
\> \> \> compute the partial injection $I_a$ for $a$ using
$\func{RandomPartialInjection}{n}$\\
\> \keyw{until} the resulting $A$-graph is admissible.\\
\end{tabbing} }

%%%%%%%%%%%%%%%%%%%%%%%%
\subsection{Complexity}\label{sec: complexity}

The algorithm requires manipulating large integers: $I_n$ is of the
order of $\O(n^n)$.  Computing in multiprecision, that is, without any
approximation, guarantees the absence of bias in the distribution of
the generated objects.  However, we also briefly discuss floating
point implementations at the end of this section.

We first evaluate the complexity of the algorithm in the RAM model,
\textit{i.e.}, under the unit cost assumption (or uniform cost
convention) according to which each data element (here: each integer)
is stored in one unit of space, and the elementary operations
(reading, writing, comparing, performing arithmetic operations, etc)
require one unit of time, -- even for large numbers.

The pre-computation phase that stores the values of $I_n$ uses the
recurrence relation on the $I_n$ given in Equation  (\ref{eq:rec}), and it
requires $\O(n)$ operations.

The (worst case) time complexity $T(n)$ of
$\func{RandomPartialInjection}{n}$ (assuming that the values of the
$I_n$ are precomputed) satisfies the following inequality
$$T(n) \le \max_{1\le k\le n}(ck+ T(n-k))$$
(where $c$ is a constant), so that $T(n) \le cn$, that is, $T(n) =
\O(n)$.

Next, checking whether the $A$-graph generated is connected can be
done using common algorithms on graphs (depth-first search) in time
$\O(n)$.  Checking 1-trimness is also done in $\O(n)$, by scanning the
list of edges which has at most $|A|\,n$ elements.  This part of the
algorithm does not require manipulating large numbers.

Corollary \ref{admissibility} shows that size $n$ $A$-graphs are
admissible with probability $1+o(1)$.  Therefore the number of rejects
(see Section \ref{sec:rejection}) for lack of admissibility is equal
in average to $\frac{1}{1+o(1)} = 1+o(1)$.  Thus the average number of
rejects tends to 0 when $n$ tends to infinity.

In conclusion, for the RAM model, the random generation algorithm
requires a precomputation that can be done in linear time, and it
uses, in average, $\O(n)$ operations to generate each admissible
$A$-graph.

In the case of the bit complexity (or logarithmic cost convention), an
integer $N$ is handled via its binary representation, of length
$\O(\log N)$.  In particular, the representation of $I_n$ has length
$\O(n\log n)$.  The basic operations on numbers of that size (reading,
writing, comparison, addition, multiplication by a number whose binary
representation is of length $\O(n)$) are performed in time $\O(n\log
n)$, whereas the multiplication of two such numbers takes time
$\O(n\log^2n)$.  Under this bit-cost assumption, the time and space
required for the pre-computation are $\O(n^2\log n)$ (instead of
$\O(n)$ in the RAM model).  And each random draw takes time
$\O(n^2\log^2n)$.

\begin{remark}
    Under the bit-cost assumption, we should also take into 
    consideration the complexity of the function $\func{Uniform}{N}$.
    Since this function returns an integer, it can be performed by a 
    rejection algorithm, 
    randomly choosing each bit of (the binary expansion of) $N$, that 
    is, in $1+\lfloor\log n\rfloor$ unit cost operations. In 
    such a process, the probability that the integer generated is 
    greater than $N$ is at most $1/2$ ($\frac{b-1}b$ in base $b$), so 
    the average number of reject is at most 2, and the complexity of 
    $\func{Uniform}{N}$ is $\O(\log N)$.
\end{remark}

In practice, it is often convenient to use floating point arithmetic
instead of multiprecision arithmetic.  In theory, the approximations
made in floating point arithmetic induce a loss of precision, and may
therefore introduce a bias in the probability distribution of the
generated objects.

Denise and Zimmermann \cite{dz99} showed that the complexity of the
floating point implementation is the same as for the RAM model.  They
also show that, if certain precautions are taken (essentially in the
choice of the rounding operator for each operation), a floating point
implementation introduces only a negligible bias in the probability
distribution of the generated objects.  In the case of partial
injections, using the standard rounding operator does not seem
experimentally to produce a significant bias, but it is not
theoretically proved.

\section{On the rank of a size $n$ subgroup}\label{sec:other}
%%%%%%%%%%%%%%%%%%%%%%%%
%\subsection{Average }

We conclude this paper with a few applications of the above results to
the study of the rank distribution of size $n$ subgroups.  The first
one concerns the expected value of this rank, and the others establish
the intuitive results that finite index (resp.  fixed rank $k$)
subgroups are asymptotically negligible.

Recall that the rank of a subgroup $H$ with a size $n$ graphical
representation $\Gamma$ is equal to $|E(\Gamma)| - n + 1$, where
$|E(\Gamma)|$ is the number of edges of the graph $\Gamma$ (see 
Section~\ref{sec: representation}).

%%% rang moyen
\begin{corollary}\label{cor:averagerank}
    The average rank of a size $n$ subgroup of $F(A)$ is
    $(|A|-1)n-|A|\sqrt n + 1$, with standard deviation $o(\sqrt n)$.
\end{corollary}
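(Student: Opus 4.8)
The plan is to express the rank as a deterministic affine function of a single combinatorial statistic of the underlying tuple of partial injections, and then to read off its first two moments from Lemma~\ref{lm count sequences}. Write $r = |A|$ and identify a size $n$ labeled $A$-graph $\Gamma$ with an $r$-tuple $(f_1,\dots,f_r)$ of partial injections on $\inter[1,n]$, one per letter. Decomposing the functional graph of a partial injection $f$ into cycles and non-empty sequences, a $k$-vertex cycle carries $k$ edges and a $k$-vertex sequence carries $k-1$ edges, so $f$ has exactly $n - X(f)$ edges, where $X(f)$ denotes its number of sequence components (an isolated vertex being a length-$1$ sequence). Hence $|E(\Gamma)| = rn - \sum_{i=1}^{r} X(f_i)$, and by the rank formula of Section~\ref{sec: representation}, setting $Y := \sum_{i=1}^{r} X(f_i)$,
$$\rank(\Gamma) \;=\; (r-1)n + 1 - Y .$$
Both $\rank$ and $Y$ are invariant under relabeling of the vertices, so by Lemma~\ref{fact labeled} they descend to well-defined statistics on size $n$ subgroups, and the uniform distribution on size $n$ subgroups is the image of the uniform distribution on admissible labeled $A$-graphs. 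The latter is the uniform distribution $\mu$ on $r$-tuples of partial injections \emph{conditioned on the event $\mathcal{E}$ that $\Gamma$ is admissible}, and $\mu(\mathcal{E}) = 1 + o(1)$ by Corollary~\ref{admissibility}. Thus it suffices to prove $\E_\mu(Y\mid\mathcal{E}) = r\sqrt n + o(\sqrt n)$ and $\sigma^2_\mu(Y\mid\mathcal{E}) = o(n)$.

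Under $\mu$, the coordinates $X(f_1),\dots,X(f_r)$ are independent copies of the variable $X_n$ of Section~\ref{sec: nber of seq}, for which Lemma~\ref{lm count sequences} gives $\E(X_n) = \sqrt n\,(1+o(1))$ and $\sigma^2(X_n)=o(n)$. Therefore
$$\E_\mu(Y) = r\sqrt n\,(1+o(1)), \qquad \sigma^2_\mu(Y) = r\,\sigma^2(X_n) = o(n), \qquad \E_\mu(Y^2) = \sigma^2_\mu(Y) + \E_\mu(Y)^2 = \O(n) .$$
To pass to the conditional distribution, I first write $\E_\mu(Y\,\mathbf{1}_{\mathcal{E}}) = \E_\mu(Y) - \E_\mu(Y\,\mathbf{1}_{\overline{\mathcal{E}}})$ and bound the last term by Cauchy--Schwarz: $\E_\mu(Y\,\mathbf{1}_{\overline{\mathcal{E}}}) \le \sqrt{\E_\mu(Y^2)}\,\sqrt{\mu(\overline{\mathcal{E}})} = \sqrt{\O(n)}\cdot o(1) = o(\sqrt n)$. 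Dividing by $\mu(\mathcal{E}) = 1+o(1)$ gives $m := \E_\mu(Y\mid\mathcal{E}) = r\sqrt n + o(\sqrt n)$, hence $\E_\mu(\rank\mid\mathcal{E}) = (r-1)n - r\sqrt n + 1 + o(\sqrt n)$, the claimed mean. For the variance I use $\sigma^2_\mu(Y\mid\mathcal{E}) = \E_\mu\big((Y-m)^2\mid\mathcal{E}\big) \le \mu(\mathcal{E})^{-1}\,\E_\mu\big((Y-m)^2\big)$ and expand $\E_\mu\big((Y-m)^2\big) = \sigma^2_\mu(Y) + (\E_\mu(Y)-m)^2$; the first summand is $o(n)$ by the display above, and the second is $o(n)$ since both $\E_\mu(Y)$ and $m$ equal $r\sqrt n + o(\sqrt n)$. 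Thus $\sigma^2_\mu(\rank\mid\mathcal{E}) = \sigma^2_\mu(Y\mid\mathcal{E}) = o(n)$, i.e. the standard deviation of the rank is $o(\sqrt n)$.

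The only genuinely delicate point is this transfer from the unconditioned measure $\mu$ to $\mu(\,\cdot\mid\mathcal{E})$: the bad event $\overline{\mathcal{E}}$ has probability only $o(1)$ — not polynomially small — so the crude bound $Y \le rn$ on $\overline{\mathcal{E}}$ is far too weak, yielding only $\E_\mu(Y\,\mathbf{1}_{\overline{\mathcal{E}}}) = o(n)$ where $o(\sqrt n)$ is needed. The remedy is to exploit the $\O(n)$ bound on the second moment $\E_\mu(Y^2)$ together with Cauchy--Schwarz for the mean, and the decomposition $\E_\mu((Y-m)^2) = \sigma^2_\mu(Y) + (\E_\mu(Y)-m)^2$ for the variance. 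Everything else — the combinatorial identity of the first paragraph and the moment estimates of the second — is routine or already established in the paper.
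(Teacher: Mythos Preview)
Your proof is correct and rests on the same combinatorial identity as the paper ($\rank = (r-1)n + 1 - Y$ with $Y$ the total number of sequence components across the $r$ partial injections, then Lemma~\ref{lm count sequences}). The paper's own argument is a single short paragraph that silently works with the unconditioned $r$-tuple measure and omits the transfer to the admissible case; your Cauchy--Schwarz handling of that conditioning is therefore more rigorous than the original, not less.
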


\begin{proof}
    Let $\Gamma$ an $A$-graph.  For each letter $a$ of the alphabet
    $A$, the number of $a$-labeled edges is the difference between $n$
    and the number of sequences in the functional graph of the partial
    injection determined by the $a$-labeled edges.  In view of
    Lemma~\ref{lm count sequences}, the number of $a$-labeled edges is
    therefore asymptotically equal to $n - \sqrt n$, with standard
    deviation $o(\sqrt n)$, and the announced result follows.
\end{proof}

\begin{corollary}\label{cor: rank k negligible}
    Let $H$ be a size $n$ subgroup of $F(A)$ and let $k \ge 1$ be an
    integer.  Then the probability that $\rank(H) \le k$ is asymptotically
    $o\left(\frac{1}{n}\right)$.
\end{corollary}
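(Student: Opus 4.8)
The plan is to reduce the statement to a large-deviation estimate for the sequence-counting parameter $X_n$ of Section~\ref{sec: nber of seq}. Write $r=|A|$. Recall from the proof of Corollary~\ref{cor:averagerank} that, for each letter $a\in A$, the number of $a$-labeled edges of $\Gamma$ equals $n-X_n^{(a)}$, where $X_n^{(a)}$ denotes the number of sequences in the functional graph of the partial injection carried by the $a$-labeled edges. Summing over $a$ and using $\rank(H)=|E(\Gamma)|-n+1$ gives $\rank(H)=(r-1)n+1-\sum_{a\in A}X_n^{(a)}$, so the event $\rank(H)\le k$ is exactly $\sum_{a\in A}X_n^{(a)}\ge(r-1)n+1-k$.

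Next I would move to the independent model. By Lemma~\ref{fact labeled} the uniform distribution on size $n$ subgroups is the push-forward of the uniform distribution on admissible labeled $A$-graphs, and the latter is the uniform distribution on $r$-tuples of partial injections on $\inter[1,n]$ conditioned on the tuple being admissible. Since Corollary~\ref{admissibility} gives $\mathbb{P}(\textrm{admissible})=1+o(1)$, conditioning on this event multiplies every probability by a factor at most $2$ once $n$ is large; it therefore suffices to bound $\mathbb{P}\big(\sum_{a\in A}X_n^{(a)}\ge(r-1)n+1-k\big)$ when the $r$ partial injections are drawn independently and uniformly, in which case each $X_n^{(a)}$ has the law of the variable $X_n$ of Section~\ref{sec: nber of seq}.

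For the large-deviation bound itself I would argue as follows. If $\sum_{a\in A}X_n^{(a)}\ge(r-1)n+1-k$, then some $X_n^{(a)}$ is at least $\frac{(r-1)n+1-k}{r}$; since $r\ge2$ forces $\frac{r-1}{r}\ge\frac12$ and $\E(X_n)\sim\sqrt n$ by Lemma~\ref{lm count sequences}, this lower bound exceeds $\E(X_n)+\frac n4$ once $n$ is large (recall that $k$ is fixed). Combining the conditioning bound, this observation, a union bound over the $r$ letters, and Chebyshev's inequality (Equation~(\ref{eq:tchebychev})), we get, for $n$ large,
$$\mathbb{P}\{\rank(H)\le k\}\ \le\ 2r\,\mathbb{P}\Big\{|X_n-\E(X_n)|\ge \frac n4\Big\}\ \le\ \frac{32\,r\,\sigma^2(X_n)}{n^2},$$
and since $\sigma^2(X_n)=o(n)$ by Lemma~\ref{lm count sequences}, the right-hand side is $o(1/n)$, as required.

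The computations here are all routine; the one point that genuinely matters is that the final estimate is $o(1/n)$ and not merely $O(1/n)$, and this rests on the sharp variance bound $\sigma^2(X_n)=o(n)$ (rather than just $O(n)$) provided by Lemma~\ref{lm count sequences}. The only other place needing a little care is the passage through the conditioning on admissibility, which is harmless precisely because that event has probability tending to $1$.
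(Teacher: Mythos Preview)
Your argument is correct and rests on the same idea as the paper's proof: Chebyshev's inequality, with the crucial input being the $o(n)$ variance bound from Lemma~\ref{lm count sequences}, applied to a deviation of order $n$. The paper is slightly more direct: it simply invokes Corollary~\ref{cor:averagerank} to get $\E(\rank)=(r-1)n-r\sqrt n+1$ and $\sigma^2(\rank)=o(n)$, and then applies Chebyshev once to the rank variable itself, obtaining
\[
\mathbb{P}\{\rank\le k\}\ \le\ \mathbb{P}\{|\rank-\E(\rank)|\ge \E(\rank)-k\}\ \le\ \frac{\sigma^2(\rank)}{(\E(\rank)-k)^2}\ =\ \frac{o(n)}{\O(n^2)}\ =\ o\!\left(\frac1n\right).
\]
Your route---passing explicitly to the independent model, then pigeonholing onto a single letter and applying Chebyshev to $X_n$---yields the same bound but with a couple of extra steps; on the other hand, it makes the passage through the conditioning on admissibility explicit, which the paper leaves implicit in its use of Corollary~\ref{cor:averagerank}.
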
    

\begin{proof}
    Corollary \ref{cor:averagerank} shows that the mean value of the
    rank of $H$ is $\E(\rank)=(|A|-1)n-|A| \sqrt n + 1$, with variance
    $\sigma^2(\rank) = o(n)$.
    
    If $\rank(H) \le k$, then in particular $|\rank(H) - \E(\rank)|
    \ge \E(\rank) - k$.  It follows, by Chebyshev's inequality (see
    Equation~(\ref{eq:tchebychev}) above), that
    \begin{eqnarray*}
	\mathbb{P}\{\rank \leq k\} &\leq&
	\mathbb{P}\{|\rank-\E(\rank)|\geq \E(\rank)-k\}\\
	&\leq& \frac{o(n)}{\O(n^2)} = o(\frac 1n).
    \end{eqnarray*}
\end{proof}

\section{Finite index subgroups}\label{sec:permutations}

We saw in Section~\ref{sec: representation} that a finitely generated
subgroup has finite index if and only if, in its graphical
representation, every letter labels a permutation, that is, a partial
injection whose domain is the full set of vertices.  Let us say, in
that case, that the corresponding $A$-graph is a \textit{permutation
$A$-graph}.  Based on this observation, we can adapt our approach to
get a linear time (in average) random generation algorithm.

As in the general case (see Section \ref{sec:algoadmissible}), we use
a rejection algorithm: we repeatedly randomly generate a permutation
of size $n$ for each letter $a\in A$, until the resulting graph is
connected and 1-trim.

Random generation of permutations is a classical object of study, and
it can be performed in time $\O(n)$ (in the RAM model, $\O(n\log n)$ 
in the bit-cost model) using the following algorithm (see
\cite{dev86} for example):
\begin{tabbing}
\quad \= \quad \= \quad \= \quad \kill
$\func{RandomPermutation}{n}$ \qquad \\
\> \keyw{for} $i\in\{1,\cdots,n\}$\\
\> \> P[$i$] = $i$ \\
\> \keyw{for} $i$ from $2$ to $n$\\
\> \> $j = 1 + \func{Uniform}{i}$ \qquad\com{$j$ is a random integer in
$\inter[1,i]$}\\
\> \> Swap $P[i]$ and $P[j]$\\
\> \keyw{return} P\\
\end{tabbing}

\vskip -.5cm

\noindent Note that this algorithm does not require manipulating large
integers.

The efficiency of the rejection algorithm depends on the average 
number of rejects, and hence on the probability, for a permutation 
$A$-graph to be connected and 1-trim. Trimness is a moot point since 
a permutation $A$-graph never has any leaf.

Connectedness is not guaranteed, but we note that Dixon \cite{Dixon}
uses Bender's theorem (Theorem~\ref{thbender} above) to compute the
asymptotic expansion of the probability for a pair (or a $r$-tuple) of
size $n$ permutations to generate a transitive subgroup of
$\mathbb{S}_n$, that is, to define a connected permutation $A$-graph.
He shows in particular that this probability is of the form
$1-1/n^{r-1} + \O(1/n^{2(r-1)})$. 

Thus the average number of rejects tends to 0 when $n$ tends to
infinity, and the average case complexity of the random generation of
an admissible permutation $A$-graph is $\O(n)$ (in the RAM model, 
$\O(n\log n)$ in the bitcost model).

We can also show that finite index subgroups are 
asymptotically negligible among subgroups of a given size. 

\begin{proposition}\label{prop: fi negligible}
    The probability for a randomly chosen size $n$ subgroup of $F(A)$
    to have finite index is $\O(n^{r/4} e^{-2r\sqrt n})$.  In
    particular, it is $o\left(n^{-k}\right)$ for any $k\ge 1$.
\end{proposition}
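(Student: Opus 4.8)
The plan is to estimate the number of size $n$ finite index subgroups and compare it with $S_{n,r}$; a crude upper bound on the former will already suffice. First I would recall (see the opening of Section~\ref{sec:permutations}, together with Section~\ref{sec: representation}) that a size $n$ subgroup $H$ has finite index exactly when its graphical representation $(\Gamma,1)$ is a \emph{permutation $A$-graph}, i.e.\ every letter labels a full permutation of $V(\Gamma)$. Such a $\Gamma$ has no leaf, so $1$-trimness is automatic and the only non-trivial admissibility requirement is connectedness. Since an $r$-tuple of permutations of $\inter[1,n]$ is precisely a labeled permutation $A$-graph of size $n$, the number of labeled connected permutation $A$-graphs of size $n$ is at most $(n!)^r$ (it is in fact asymptotically equal to $(n!)^r$ by Dixon's transitivity estimate \cite{Dixon} quoted above, but I will only need the inequality).

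Next I would invoke Lemma~\ref{fact labeled}: a connected $A$-graph of size $n$ carries exactly $(n-1)!$ isomorphism classes of labelings for which a prescribed vertex is labeled $1$. Hence each admissible permutation $A$-graph $(\Gamma,1)$ --- equivalently, each size $n$ finite index subgroup --- is obtained from exactly $(n-1)!$ labeled connected permutation $A$-graphs by forgetting the labels $2,\ldots,n$, so the number of size $n$ finite index subgroups is at most $(n!)^r/(n-1)!$. Dividing by $S_{n,r} = I_n^r\,(n-1)!^{-1}(1+o(1))$ (Section~\ref{sec:enumeration}) gives that the probability in question is at most
$$\frac{(n!)^r/(n-1)!}{S_{n,r}} = \left(\frac{n!}{I_n}\right)^r (1+o(1)).$$

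To finish, I would plug in the estimate of Proposition~\ref{prop cor du col} with $p=0$, namely $I_n/n! = \frac{e^{-1/2}}{2\sqrt\pi}\,n^{-1/4}e^{2\sqrt n}(1+o(1))$, which yields $n!/I_n = 2\sqrt\pi\,e^{1/2}\,n^{1/4}e^{-2\sqrt n}(1+o(1))$ and therefore
$$\left(\frac{n!}{I_n}\right)^r = (4\pi)^{r/2}e^{r/2}\,n^{r/4}e^{-2r\sqrt n}(1+o(1)) = \O\!\left(n^{r/4}e^{-2r\sqrt n}\right).$$
Since $e^{2r\sqrt n}$ eventually dominates every power of $n$, this quantity is $o(n^{-k})$ for every $k\ge 1$, which gives the last assertion.

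There is really no serious obstacle here: all the analytic work (the asymptotics of $S_{n,r}$ and of $I_n/n!$) is already done, and connectedness only needs to be bounded from above, not estimated precisely, so Dixon's theorem is not even essential for the statement as phrased. The one point that deserves attention is the bookkeeping with the factor $(n-1)!$ when translating between labeled permutation $A$-graphs and finite index subgroups via Lemma~\ref{fact labeled}; getting that factor right is exactly what makes the two occurrences of $(n-1)!$ cancel and leaves the clean ratio $(n!/I_n)^r$.
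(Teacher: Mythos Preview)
Your proposal is correct and follows essentially the same route as the paper: bound the number of labeled permutation $A$-graphs by $(n!)^r$, compare with the total count $I_n^r(1+o(1))$ of admissible labeled $A$-graphs, and apply Proposition~\ref{prop cor du col} with $p=0$ to estimate $(n!/I_n)^r$. The only difference is cosmetic: the paper phrases the ratio directly at the level of labeled $A$-graphs (where the probability is the same as for subgroups, since the correspondence is uniformly $(n-1)!$-to-one), while you make the passage through Lemma~\ref{fact labeled} and the cancellation of the two $(n-1)!$ factors explicit.
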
    

\begin{proof}
Let $r = \rank(F) = |A|$.  The number of size $n$ finite index
subgroups is at most the number of $r$-tuples of permutations, namely
$n!^r$. The number of size $n$ subgroups is, according to the 
discussion in this paper, equivalent to the number of $r$-tuples of 
partial injections, that is, it is equal to $I_n^r (1 + o(1))$.

Thus the probability that a size $n$ subgroup has finite index is at
most equal to $\left(\frac{n!}{I_n}\right)^r(1+o(1))$.  By
Proposition~\ref{prop cor du col} (applied with $p = 0$), we know that
$I_n/n!  = \O(n^{-1/4} e^{2\sqrt n})$.  Therefore
$$\left(\frac{n!}{I_n}\right)^r = \O\left(n^{r/4} e^{-2r\sqrt n}\right),$$
which converges to $0$ faster than the inverse of any polynomial.
\end{proof}

%%%%%%%%%%%%%%%%%%%%%%%%
\section{A few questions}
%%%%%%%%%%%%%%%%%%%%%%%%

A first question, prompted by Proposition~\ref{prop: fi negligible},
is the following.  Even though finite index subgroups are negligible
among finitely generated subgroups, we saw in
Section~\ref{sec:permutations} how to randomly generate them.  When
$k$ is fixed, rank $k$ subgroups are also asymptotically negligible
among finitely generated subgroups (Corollary~\ref{cor: rank k
negligible}).  Can we find an efficient random generation algorithm
for these subgroups?

\medskip

Our second question is related with another method used in the
literature to generate subgroups (not only for free groups).  This
method is based on the idea of randomly generating a $k$-tuple of
elements that generate the subgroup --- with $k$ fixed and, say, the
maximal length of the generators being allowed to tend to infinity.
It is used for instance by Jutsikawa \cite{jitsukawa} to study the
distribution of malnormal subgroups in free groups.  We refer also to
Martino, Turner and Ventura \cite{MTV} on the distribution of
monomorphisms between free groups, and to Miasnikov and Ushakov
\cite{MiasUsha} for a survey of this technique in relation with
group-based cryptography.

The question that arises in this context is to compare the
distribution of subgroups that occurs with this generation scheme and
the distribution we considered in this paper.  They must be different
since we fix the size of the subgroups generated, whereas they fix the
number and the maximal length of a set of generators, which may lead
to graphical representations of varying size.  One must also take into
consideration the fact that each subgroup is generated by a
potentially large number of $k$-tuples of generators.  However, it
remains possible that generic properties (those that have
asymptotically probability 1) coincide for both distributions.

%%%%%%%%%%%%%%%%%%%%%%%%
{\small
}

%%%%%%%%%%%%%%%%%%%%%%%%
%%%%%%%%%%%%%%%%%%%%%%%%
%%%%%%%%%%%%%%%%%%%%%%%%
\end{document}